\newcommand{\subfiguretitle}[1]{{\scriptsize{#1}} \\}
\newcommand{\R}{\mathbb{R}}                                      
\newcommand{\pd}[2]{\frac{\partial#1}{\partial#2}}               
\newcommand{\innerprod}[2]{\left\langle #1,\, #2 \right\rangle}  
\newcommand{\ts}{\hspace*{0.1em}}                                
\providecommand{\norm}[1]{\left\lVert #1 \right\rVert}           
\providecommand{\vdot}{\boldsymbol\cdot}                         
\newcommand*\circled[1]{\tikz[baseline=-3pt]{\node[shape=circle,draw,inner sep=1.2pt,color=red] (char) {\tiny \textcolor{red}{#1}};}}
\newcommand\xqed[1]{\leavevmode\unskip\penalty9999 \hbox{}\nobreak\hfill \quad\hbox{#1}}
\newcommand{\exampleSymbol}{\xqed{$\triangle$}}
\DeclareMathOperator{\tr}{tr}
\DeclareMathOperator{\mspan}{span}
\DeclareMathOperator*{\argmin}{arg\,min}
\let\vec\relax
\DeclareMathOperator{\vec}{vec}
\newtheorem{theorem}{Theorem}[section]
\newtheorem{lemma}[theorem]{Lemma}
\newtheorem{definition}[theorem]{Definition}
\theoremstyle{definition}
\newtheorem{example}[theorem]{Example}
\newtheorem{remark}[theorem]{Remark}
\renewcommand*\env@matrix[1][*\c@MaxMatrixCols c]{%
  \hskip -\arraycolsep
  \let\@ifnextchar\new@ifnextchar
  \array{#1}}
\def\blfootnote{\gdef\@thefnmark{}\@footnotetext}
\begin{document}

\title{Data-driven system identification using \\ quadratic embeddings of nonlinear dynamics}
\author[1]{Stefan Klus}
\author[2]{Joel-Pascal Ntwali N'konzi}
\affil[1]{School of Mathematical \& Computer Sciences, Heriot--Watt University, Edinburgh, UK}
\affil[2]{Maxwell Institute for Mathematical Sciences, University of Edinburgh and Heriot--Watt University, Edinburgh, UK}

\date{}

\maketitle

\begin{abstract}
We propose a novel data-driven method called QENDy (\emph{Quadratic Embedding of Nonlinear Dynamics}) that not only allows us to learn quadratic representations of highly nonlinear dynamical systems, but also to identify the governing equations. The approach is based on an embedding of the system into a higher-dimensional feature space in which the dynamics become quadratic. Just like SINDy (\emph{Sparse Identification of Nonlinear Dynamics}), our method requires trajectory data, time derivatives for the training data points, which can also be estimated using finite difference approximations, and a set of preselected basis functions, called \emph{dictionary}. We illustrate the efficacy and accuracy of QENDy with the aid of various benchmark problems and compare its performance with SINDy and a deep learning method for identifying quadratic embeddings. Furthermore, we analyze the convergence of QENDy and SINDy in the infinite data limit, highlight their similarities and main differences, and compare the quadratic embedding with linearization techniques based on the Koopman operator.
\end{abstract}

\section{Introduction}

Over the last few years, many data-driven approaches for the analysis of complex dynamical systems have been proposed that are based on the Koopman operator or its infinitesimal generator. The Koopman operator provides a linear representation of the typically highly nonlinear dynamics, describing the system in terms of \emph{observables} and their evolution in time \cite{Ko31, LaMa94, Mezic05}. Its eigenvalues and eigenfunctions, often estimated from trajectory data, contain important information about global properties such as inherent time scales and associated spatiotemporal patterns \cite{BMM12, WKR15, KKS16}. Applications include the detection of metastable sets, system identification, stability analysis, forecasting, and control \cite{SS13, MauMez16, MauGon16, KorMez16, PeiKlul19, KNPNCS20, BSH21}, see \cite{KD24} for a detailed overview and additional references. However, the linearity of the Koopman operator comes at a cost---namely, the representation is \emph{infinite-dimensional}. The Koopman operator framework is also closely related to Carleman linearization techniques \cite{Car32, KS91, DY24}. If we allow not only linear but also quadratic representations, it is always possible to obtain a \emph{finite-dimensional} model as shown in \cite{Ker81, Gu11}, provided that the system comprises only elementary functions such as polynomial, trigonometric, exponential, logarithmic, and rational functions (including compositions thereof). The reformulation process---sometimes called \emph{quadratization} or in general \emph{polynomialization}---is based on successively defining new variables and applying the chain rule. Similar ideas have also been used in \cite{App02, SV87, PP05, LZZZ15, Car15, KW19, CP24, BIPK24}. Bounds on the number of required additional variables are known, see, e.g., \cite{Gu11}, but the quadratic embedding itself is in general not uniquely defined. A quadratic embedding of the dynamics can be regarded as a compromise between a highly nonlinear but low-dimensional model and a linear but infinite-dimensional model.

Instead of using the Koopman operator or generator for system identification and forecasting, we develop a method that is similar in spirit, but relies on a quadratic embedding instead of a Koopman or Carleman linearization. The main goal is again to reformulate nonlinear dynamical systems using a model structure that is as simple as possible but still able to capture complicated dynamical behavior and can be easily estimated from simulation or measurement data. These quadratic systems can then be combined with model reduction, optimization, model predictive control, or uncertainty quantification techniques tailored to such problems. A deep learning method for identifying quadratic embeddings was proposed in \cite{GB24}. While such an approach is extremely flexible and versatile, the learned dynamical systems typically lack interpretability and often do not generalize well to unseen data. Another drawback is that the method requires solving an optimization problem involving a linear combination of three potentially contradictory loss functions so that we obtain only an \emph{optimal compromise}, i.e., one point of the Pareto set (see Section \ref{sec:dl-quadembed} for more details).

Our method, which we call \emph{Quadratic Embedding of Nonlinear Dynamics} (QENDy), on the other hand, is closely related to the \emph{Sparse Identification of Nonlinear Dynamics} (SINDy) method~\cite{BPK16}, which directly estimates the governing equations and can be regarded as a special case of a Koopman generator estimation as shown in \cite{KNPNCS20}. This allows us to not only identify the quadratic embedding, but also to extract the governing equations, which can then be written in terms of the functions contained in the dictionary. Furthermore, we show that an optimal solution of the resulting optimization problem can be obtained by solving systems of linear equations. In this sense, our method is closely related to the operator inference framework for learning reduced-order models from data by postulating a polynomial dynamical system in the reduced space \cite{PW16}. An open question---regarding not only QENDy and SINDy, but also methods for approximating the Koopman operator such as \emph{Extended Dynamic Mode Decomposition} (EDMD) and its extensions~\cite{NoNu13, WKR15, KKS16}---is the choice of suitable basis functions. Given a sufficiently large dictionary, SINDy uses sparse approximation techniques to identify parsimonious models. That is, the goal is to represent the vector field by a linear combination of only a few basis functions. QENDy, on the other hand, uses a dictionary to define a typically nonlinear mapping from the original state space to a higher-dimensional embedding space and then learns a quadratic model in this embedded space. Relationships between these different modeling approaches will be discussed in more detail below. The main contributions of this work are as follows:
\begin{enumerate}[leftmargin=3.5ex, itemsep=0ex, topsep=0.5ex, label=\roman*)]
\item We first derive a method called QENDy for learning quadratic embeddings of nonlinear ordinary differential equations, given a set of training data points and a dictionary containing basis functions.
\item We analyze the convergence of QENDy and SINDy in the infinite data limit and show that both methods can be interpreted in terms of best approximations.
\item We present numerical results for benchmark problems and compare the efficacy and accuracy of QENDy, SINDy, and an autoencoder-based method for learning quadratic embeddings.
\end{enumerate}
We will start with an introduction to the quadratic embedding framework and several guiding examples in Section~\ref{sec:Quadratic reformulation} and then derive an optimization problem that allows us to identify the optimal quadratic embedding for a given feature space in Section~\ref{sec:QENDy}. Numerical results will be presented in Section~\ref{sec:Numerical results} and concluding remarks and open problems in Section~\ref{sec:Conclusion}.

\section{Quadratic reformulation of nonlinear dynamical systems}
\label{sec:Quadratic reformulation}

Given an autonomous ordinary differential equation
\begin{equation} \label{eq:ODE}
    \dot{x} = F(x),
\end{equation}
with $ F \colon \R^n \to \R^n $, we embed the state $ x $ into an $ N $-dimensional feature space via
\begin{equation*}
    z := \phi(x) =
    \begin{bmatrix}
        \phi_1(x) \\
        \vdots \\
        \phi_N(x)
    \end{bmatrix},
\end{equation*}
where $ \phi \colon \R^n \to \R^N $ and $ N > n $. We call $ \{ \phi_i  \}_{i=1}^N $ the \emph{set of basis functions} or \emph{dictionary}. Using the chain rule, we obtain
\begin{equation*}
    \dot{z} =
    \underbrace{
    \begin{bmatrix}
        \nabla \phi_1^\top(x) \\
        \vdots \\
        \nabla \phi_N^\top(x)
    \end{bmatrix}}_{=: J(x)}
    \dot{x} = J(x) \ts F(x).
\end{equation*}
The idea is to lift the original dynamics to a higher-dimensional space in such a way that we obtain a quadratic ordinary differential equation of the form
\begin{equation} \label{eq:quadratic ODE}
    \dot{z} = A (z \otimes z) + B \ts z + C,
\end{equation}
with $ A \in \R^{N \times N^2} $, $ B \in \R^{N \times N} $, and $ C \in R^N $, where $ \otimes $ denotes the Kronecker product. Any nonlinear ordinary differential equation---provided it can be written in terms of compositions of elementary functions---can be reformulated as a quadratic system in a higher-dimensional feature space, see, e.g., \cite{Ker81, Gu11}.

\begin{remark}
Before we continue with an illustrative example, a few remarks are in order:
\begin{enumerate}[leftmargin=3.5ex, itemsep=0ex, topsep=0.5ex, label=\roman*)]
\item Let $ \mathcal{L} $ denote the infinitesimal generator of the Koopman operator associated with \eqref{eq:ODE}, which for an observable $ f $ is defined by
\begin{equation*}
    \mathcal{L} f = F \vdot \nabla f,
\end{equation*}
we can also write the quadratic system as $ \dot{z} = \mathcal{L} \phi $, where the generator is applied component-wise.
\item Since $ z \otimes z $ contains $ z_i \ts z_j $ and also $ z_j \ts z_i $, some columns of the matrix $ A $ are clearly redundant. Reshaping each row of $ A $ as an $ N \times N $ matrix, we could simply store only the upper-triangular part or assume w.l.o.g.\ that the matrix is symmetric, reducing the overall number of degrees of freedom from $ N^3 $ to $ \frac{N^2(N+1)}{2} $. However, for the sake of clarity, we will work with the full matrix~$ A $.
\item Instead of using a matrix $ A \in \R^{N \times N^2} $, we could also define a tensor $ A \in R^{N \times N \times N} $. This would allow us to use low-rank tensor approximation techniques (e.g., matrix product states \cite{Ban06} or, equivalently, the tensor-train format \cite{Ose11}) and to potentially mitigate the curse of dimensionality.
\end{enumerate}
\end{remark}

\begin{example}
Given the differential equation $ \dot{x} = \frac{1}{1 + e^{x}} $, we first define $ \dot{z}_1 = z_2 $ and $ z_2 = \frac{1}{1 + e^{z_1}} $, which implies
\begin{align*}
    \dot{z}_2 &= -\frac{1}{(1 + e^{z_1})^2} \ts e^{z_1} \dot{z}_1 = -z_2^3 \ts e^{z_1}. \\
\intertext{Adding $ z_3 = e^{z_1} $, we then obtain}
    \dot{z}_1 &= z_2, \\
    \dot{z}_2 &= -z_2^3 \ts z_3, \\
    \dot{z}_3 &= e^{z_1} \ts \dot{z}_1 = z_2 \ts z_3.
\intertext{Finally, defining $ z_4 = z_2^2 \ts z_3 $ yields}
    \dot{z}_1 &= z_2, \\
    \dot{z}_2 &= -z_2 \ts z_4, \\
    \dot{z}_3 &= z_2 \ts z_3, \\
    \dot{z}_4 &= 2 \ts z_2 \ts \dot{z}_2 \ts z_3 + z_2^2 \ts \dot{z}_3 = -2 \ts z_4^2 + z_2 \ts z_4,
\end{align*}
which is a quadratic model of the form \eqref{eq:quadratic ODE}. \exampleSymbol
\end{example}

In what follows, we assume that the original state space variables $ x_1, \dots, x_n $ can be written as linear combinations of the feature space variables $ z_1, \dots, z_N $, which implies that there exists a matrix $ G \in \R^{n \times N} $ such that
\begin{equation*}
    x = G \ts z.
\end{equation*}
This is similar to the assumption that the so-called full-state observable $ g(x) = x $, which for Koopman operator-based methods such as EDMD and gEDMD is required for the decomposition of the dynamics into eigenvalues, eigenfunctions, and modes, is contained in the span of the basis functions, see \cite{WKR15, KNPNCS20} for details. The easiest way to accomplish this is to add the observables $ \{ x_i \}_{i=1}^n $ to the dictionary. The above assumption then implies that
\begin{equation} \label{eq:projection of quadratic ODE}
    \dot{x} = G \ts \dot{z} = G \ts A (z \otimes z) + G \ts B \ts z + G \ts C.
\end{equation}
That is, once we have learned the quadratic embedding, we can also extract the governing equations of the original dynamical system. Note that after mapping the initial condition $ x(t_0) = x_0 $ to the feature space using $ z(t_0) = \phi(x_0) $, we do not need to evaluate the typically nonlinear functions contained in the dictionary anymore.

\begin{example} \label{ex:guiding examples}
Let us consider two simple guiding examples:
\begin{enumerate}[leftmargin=3.5ex, itemsep=0ex, topsep=0.5ex, label=\roman*)]
\item Given $ \dot{x} = -\frac{x}{1+x} $, see \cite{GB24}, we define
\begin{equation*}
    z =
    \begin{bmatrix}
        x \\[0.5ex]
        \frac{1}{1+x} \\[0.8ex]
        \frac{x}{(1+x)^2} \\[1.1ex]
    \end{bmatrix}
    \implies
    \dot{z} =
    \begin{bmatrix}
        1 \\[0.5ex]
        -\frac{1}{(1+x)^2} \\[0.8ex]
        \frac{1}{(1+x)^2} - \frac{2 \ts x}{(1 + x)^3} \\[1.1ex]
    \end{bmatrix}
    \left(-\tfrac{x}{1+x}\right)
    =
    \begin{bmatrix}
        -z_1 \ts z_2 \\
        z_2 \ts z_3 \\
        -z_2 \ts z_3 + 2 \ts z_3^2
    \end{bmatrix},
\end{equation*}
so that in this case $ G = \begin{bmatrix} 1 & 0 & 0 \end{bmatrix} $. We can rewrite the nonlinear system as a quadratic differential equation of the form \eqref{eq:quadratic ODE} in a three-dimensional space, with
\begin{equation*}
    A = \hspace*{-1.5ex}
    \kbordermatrix{
        & z_1^2 & z_1 \ts z_2 & z_1 \ts z_3 & z_2 \ts z_1 & z_2^2 & z_2 \ts z_3 & z_3 \ts z_1 & z_3 \ts z_2 & z_3^2 \\
        & 0 & -1 & 0 & 0 & 0 &  0 & 0 & 0 & 0 \\
        & 0 &  0 & 0 & 0 & 0 &  1 & 0 & 0 & 0 \\
        & 0 &  0 & 0 & 0 & 0 & -1 & 0 & 0 & 2
    },~~
    B = \hspace*{-1.5ex}
    \kbordermatrix{
        & z_1 & z_2 & z_3 \\
        & 0 & 0 & 0 \\
        & 0 & 0 & 0 \\
        & 0 & 0 & 0
    },~
    C = \hspace*{-1.5ex}
    \kbordermatrix{
        & \mathds{1} \\
        & 0  \\
        & 0  \\
        & 0
    }.
\end{equation*}
As already mentioned above, this representation is in general not unique, even if we remove the redundant columns from the matrix $ A $. We could, for instance, also write $ \dot{z}_1 = -z_1 \ts z_3 - z_3 $ or use any convex combination of the two corresponding matrix representations.
\item For the damped nonlinear pendulum, see also \cite{GB24}, given by
\begin{equation*}
    \begin{bmatrix}
        \dot{x}_1 \\
        \dot{x}_2 \\
    \end{bmatrix}
    =
    \begin{bmatrix}
        x_2 \\
        -\sin(x_1) - c \ts x_2
    \end{bmatrix},
\end{equation*}
where $ c $ is the damping constant, we define
\begin{equation*}
    z =
    \begin{bmatrix}
        x_1 \\
        x_2 \\
        \sin(x_1) \\
        \cos(x_1)
    \end{bmatrix}
    \implies
    \dot{z} =
    \begin{bmatrix}
        1 & 0 \\
        0 & 1 \\
        \cos(x_1) & 0 \\
        -\sin(x_1) & 0
    \end{bmatrix}
    \begin{bmatrix}
        x_2 \\
        -\sin(x_1) - c \ts x_2
    \end{bmatrix}
    =
    \begin{bmatrix}
        z_2 \\
        -z_3 - c \ts z_2 \\
        z_2 \ts z_4 \\
        -z_2 \ts z_3
    \end{bmatrix}.
\end{equation*}
The projection onto the original state space is then defined by the matrix
\begin{equation*}
    G =
    \begin{bmatrix}
        1 & 0 & 0 & 0 \\
        0 & 1 & 0 & 0
    \end{bmatrix}.
\end{equation*}
This shows that we can represent the nonlinear pendulum by a quadratic system in a four-dimensional space. \exampleSymbol
\end{enumerate}
\end{example}

\begin{remark}
The quadratic embedding approach is similar to the Koopman linearization, which seeks to find a linear but in general infinite-dimensional representation of the system:
\begin{enumerate}[leftmargin=3.5ex, itemsep=0ex, topsep=0.5ex, label=\roman*)]
\item First, consider
\begin{equation*}
    \begin{bmatrix}
        \dot{x}_1 \\
        \dot{x}_2 \\
    \end{bmatrix}
    =
    \begin{bmatrix}
        x_1 - x_2^4 \,\, \\
        2 \ts x_2
    \end{bmatrix}.
\end{equation*}
Defining $ z_1 = x_1 $, $ z_2 = x_2 $, and $ z_3 = x_2^4 $, i.e., $ \phi(x) = \big[x_1, x_2, x_2^4\big]^\top $, we can rewrite the nonlinear system as a linear system
\begin{equation*}
    \begin{bmatrix}
        \dot{z}_1 \\
        \dot{z}_2 \\
        \dot{z}_3
    \end{bmatrix}
    =
    \begin{bmatrix}
        1 & 0 & -1 \\
        0 & 2 & 0 \\
        0 & 0 & 8
    \end{bmatrix}
    \begin{bmatrix}
        z_1 \\
        z_2 \\
        z_3
    \end{bmatrix}
\end{equation*}
in three dimensions. The three observables $ \phi_i $, $ i = 1, \dots, 3 $, span in this case an invariant subspace of the Koopman generator associated with the original nonlinear system. Given a linear dynamical system $ \dot{z} = B \ts z $, we can construct eigenfunctions of the Koopman generator by computing left eigenvectors of the matrix $ B $. Assume that $ B^\top v = \lambda \ts v $, then $ \varphi(z) = z \vdot v $ is an eigenfunction since
\begin{equation*}
    \mathcal{L} \varphi(z) = (B \ts z) \vdot v = z \vdot (B^\top v) = z \vdot (\lambda \ts v) = \lambda \ts \varphi(z).
\end{equation*}
Products of these eigenfunctions are again eigenfunctions. We can use this approach to compute eigenfunctions of the original system. For the above system, for instance, we obtain the eigenfunction $ \varphi(x) = 7 \ts x_1 + x_2^4 $. In general, however, the projection of the Koopman generator onto a finite-dimensional subspace spanned by a set of basis functions leads to approximation errors, see \cite{KNPNCS20} for details.
\item For the modified system
\begin{equation*}
    \begin{bmatrix}
        \dot{x}_1 \\
        \dot{x}_2 \\
    \end{bmatrix}
    =
    \begin{bmatrix}
        x_1 - x_2^4 \,\, \\
        x_1 + 2 \ts x_2
    \end{bmatrix},
\end{equation*}
on the other hand, this linearization approach does not work: Defining again $ z_1 = x_1 $, $ z_2 = x_2 $, and $ z_3 = x_2^4 $, we have $ \dot{z}_3 = 4 \ts z_1 \ts z_2^3 + 8 \ts z_3 $, which would require introducing a new auxiliary variable $ z_4 = z_1 \ts z_2^3 $ so that $ \dot{z}_4 = z_4 - z_2^7 + 3 \ts z_1^2 + 3 \ts z_1 \ts z_2 $, which in turn would again require defining new variables. In this case, we would need infinitely many terms. In order to find a quadratic model, we can simply define
\begin{equation*}
    z =
    \begin{bmatrix}
        x_1 \\
        x_2 \\[0.5ex]
        x_2^2 \,\,
    \end{bmatrix}
    \implies
    \dot{z} =
    \begin{bmatrix}
        z_1 - z_3^2 \\
        z_1 + 2 \ts z_2 \\
        2 \ts z_1 \ts z_2 + 4 \ts z_3
    \end{bmatrix},
\end{equation*}
i.e., the system is quadratic in the chosen three-dimensional feature space. \exampleSymbol
\end{enumerate}
\end{remark}

The example illustrates that if we add new variables and apply the chain rule to derive a linear model, this process does in general not terminate. If we allow quadratic systems, on the other hand, it is always possible to determine a finite-dimensional model \cite{Ker81, Gu11}.

\section{Learning quadratic embeddings from data}
\label{sec:QENDy}

Our goal is to learn the quadratic embedding \eqref{eq:quadratic ODE} from simulation or measurement data. Assume we have $ m $ data points $ x^{(k)} $, extracted for instance from one long trajectory, and the corresponding time derivatives $ \dot{x}^{(k)} $. Our training data is thus given by $ \{ (x^{(k)}, \dot{x}^{(k)}) \}_{k=1}^m $. We either need to be able to evaluate the right-hand side pointwise to compute $ \dot{x}^{(k)} = F\big(x^{(k)}\big) $ or estimate the derivatives using finite difference approximations. In addition to the training data, we have to select a set of basis functions $ \{ \phi_1, \dots, \phi_N \} $. The gradients of the basis functions, which will be required later, can be computed analytically or using automatic differentiation.

\subsection{Derivation of QENDy} \label{sec:derivation_QENDy}

Let $ z^{(k)} = \phi\big(x^{(k)}\big) $ and $ \dot{z}^{(k)} = J\big(x^{(k)}\big) \ts \dot{x}^{(k)} $. In order to find the optimal $ A $, $ B $, and $ C $ in \eqref{eq:quadratic ODE} for the given set of basis functions, we minimize the loss function
\begin{equation*}
    L(A, B, C) = \sum_{k=1}^m \norm{ \dot{z}^{(k)} - A \big(z^{(k)} \otimes z^{(k)}\big) - B \ts z^{(k)} - C }_2^2.
\end{equation*}
We define the data matrices
\begin{align*}
    Z_1 &=
    \begin{bmatrix}
        z^{(1)} & z^{(2)} & \dots & z^{(m)}
    \end{bmatrix}
    \in \R^{N \times m}, \\
    Z_2 &=
    \begin{bmatrix}
        z^{(1)} \otimes z^{(1)} & z^{(2)} \otimes z^{(2)} & \dots & z^{(m)} \otimes z^{(m)}
    \end{bmatrix}
    \in \R^{N^2 \times m}, \\
    \dot{Z} &=
    \begin{bmatrix}
        \dot{z}^{(1)} & \dot{z}^{(2)} & \dots & \dot{z}^{(m)}
    \end{bmatrix}
    \in \R^{N \times m},
\end{align*}
so that we can rewrite the loss function as
\begin{equation} \label{eq:loss function}
    L(A, B, C) = \norm{\dot{Z} - A \ts Z_2 - B \ts Z_1 - C \ts \mathds{1}_m^\top}_F^2,
\end{equation}
where $ \norm{\ts\cdot\ts}_F $ denotes the Frobenius norm and $ \mathds{1}_m \in \R^m $ the vector of ones, which is here simply used to create $ m $ copies of the vector~$ C $.

\begin{theorem} \label{thm:derivatives}
The derivatives of the loss function \eqref{eq:loss function} with respect to $ A $, $ B $, and $ C $ are given by
\begin{align*}
    \pd{L(A, B, C)}{A} &= 2 \ts A \ts Z_2 \ts Z_2^\top - 2 \ts \dot{Z} \ts Z_2^\top + 2 \ts B \ts Z_1 \ts Z_2^\top + 2 \ts C \ts \mathds{1}_m^\top Z_2^\top, \\
    \pd{L(A, B, C)}{B} &= 2 \ts B Z_1 \ts Z_1^\top - 2 \ts \dot{Z} \ts Z_1^\top + 2 \ts A \ts Z_2 \ts Z_1^\top + 2 \ts C \ts \mathds{1}_m^\top Z_1^\top, \\
    \pd{L(A, B, C)}{C} &= 2 \ts m \ts C - 2 \ts \dot{Z} \mathds{1}_m + 2 \ts A \ts Z_2 \ts \mathds{1}_m + 2 \ts B \ts Z_1 \ts \mathds{1}_m.
\end{align*}
\end{theorem}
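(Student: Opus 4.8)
The plan is to treat this as a routine matrix-calculus computation, differentiating the Frobenius-norm loss term by term. The cleanest route is via the matrix differential combined with the cyclic invariance of the trace, which sidesteps writing out individual entries. First I would abbreviate the residual as
\begin{equation*}
    R = \dot{Z} - A \ts Z_2 - B \ts Z_1 - C \ts \mathds{1}_m^\top,
\end{equation*}
so that the loss takes the form $ L = \norm{R}_F^2 = \tr(R^\top R) $. Differentiating this trace expression gives $ dL = 2 \ts \tr(R^\top \ts dR) $, since the two cross terms coincide after using $ \tr(M) = \tr(M^\top) $.

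Next I would isolate the contribution of each variable by freezing the other two. Varying only $ A $ yields $ dR = -\ts dA \ts Z_2 $, whence $ dL = -2 \ts \tr(R^\top \ts dA \ts Z_2) = -2 \ts \tr\big((R \ts Z_2^\top)^\top \ts dA\big) $ after one cyclic permutation and a transpose. Reading off the gradient from the canonical identity $ dL = \tr\big((\pd{L}{A})^\top \ts dA\big) $ gives $ \pd{L}{A} = -2 \ts R \ts Z_2^\top $. The same manipulation with $ dR = -\ts dB \ts Z_1 $ and $ dR = -\ts dC \ts \mathds{1}_m^\top $ produces $ \pd{L}{B} = -2 \ts R \ts Z_1^\top $ and $ \pd{L}{C} = -2 \ts R \ts \mathds{1}_m $, respectively.

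Finally I would substitute the definition of $ R $ back into each compact expression and expand. For $ A $ and $ B $ this is a direct distribution of the matrix product over the four summands, immediately reproducing the stated formulas. For $ C $ the only point of note is the term $ C \ts \mathds{1}_m^\top \mathds{1}_m $, where I would invoke $ \mathds{1}_m^\top \mathds{1}_m = m $ to collapse it to the factor $ 2 \ts m \ts C $.

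I do not expect a genuine obstacle; the result is essentially bookkeeping. The only points requiring care are keeping the transposes consistent when rewriting $ \tr(R^\top \ts dA \ts Z_2) $ in the form $ \tr\big((\,\cdot\,)^\top \ts dA\big) $, and remembering that $ C $ is replicated across columns by $ \mathds{1}_m^\top $, which is precisely what contracts to the scalar $ m $ in the derivative with respect to $ C $. One could instead verify each formula entrywise, but the differential approach keeps the computation short and makes the structural symmetry between the three gradients transparent.
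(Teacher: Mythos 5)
Your proposal is correct, and it reaches the stated formulas by a genuinely different organization of the computation than the paper. The paper first multiplies out $\norm{\dot{Z} - A \ts Z_2 - B \ts Z_1 - C \ts \mathds{1}_m^\top}_F^2$ into ten separate trace terms and then differentiates each term individually, quoting tabulated identities such as $\pd{}{A} \tr\big(X \ts A \ts Y\big) = X^\top Y^\top$ and $\pd{}{A} \tr\big(A^\top X \ts A \ts Y\big) = X A \ts Y + X^\top \! A \ts Y^\top$ together with cyclic invariance, before reassembling the pieces. You instead keep the residual $R = \dot{Z} - A \ts Z_2 - B \ts Z_1 - C \ts \mathds{1}_m^\top$ intact, differentiate the compact form $L = \tr(R^\top R)$ via the matrix differential $dL = 2 \ts \tr(R^\top dR)$, identify the gradients $\pd{L}{A} = -2 \ts R \ts Z_2^\top$, $\pd{L}{B} = -2 \ts R \ts Z_1^\top$, $\pd{L}{C} = -2 \ts R \ts \mathds{1}_m$ from the canonical pairing, and only expand $R$ at the very end; all your intermediate steps (the cyclic permutation, the transpose identification, and the contraction $\mathds{1}_m^\top \mathds{1}_m = m$ that produces the $2 \ts m \ts C$ term) check out, and the convention $dL = \tr\big((\pd{L}{A})^\top dA\big)$ is consistent with the layout the paper uses. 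The trade-off: your route is shorter, avoids the bookkeeping of cross terms entirely (the paper must track terms \circled{1}, \circled{4}, \circled{7}, \circled{8} for $A$ alone), and makes the structural symmetry of the three gradients transparent, since each is $-2 \ts R$ times the transpose of the factor multiplying the corresponding unknown; the paper's expansion, in exchange, needs no familiarity with the differential formalism or the gradient-identification convention, relying only on derivative formulas that can be cited directly from the Matrix Cookbook, which makes each individual step more elementary and easier to verify in isolation.
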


The proof can be found in Appendix \ref{app:Proof}. In order to find an optimal solution, the derivatives in Theorem~\ref{thm:derivatives} must be zero. Transposing the equations, we obtain
\begin{alignat}{3}
    Z_2 \ts Z_2^\top A^\top &+\,& Z_2 \ts Z_1^\top B^\top &+\,& Z_2 \ts \mathds{1}_m \ts C^\top &= Z_2 \ts \dot{Z}^\top, \nonumber \\
    Z_1 \ts Z_2^\top A^\top &+& Z_1 \ts Z_1^\top B^\top &+& Z_1 \ts \mathds{1}_m \ts C^\top &= Z_1 \ts \dot{Z}^\top, \label{eq:minimum conditions} \\
    \mathds{1}_m^\top Z_2^\top A^\top &+& \mathds{1}_m^\top Z_1^\top B^\top &+& m \ts C^\top &= \mathds{1}_m^\top \dot{Z}^\top \nonumber.
\end{alignat}

\begin{definition}[Vectorization]
Let $ X \in \R^{m \times n} $ be an arbitrary matrix. We define $ \vec(X) \in \R^{m \ts n} $ to be the vectorization of $ X $, which reshapes a matrix as a vector by stacking its columns.
\end{definition}

\begin{lemma}
Given two matrices $ X \in \R^{m \times n} $ and $ Y \in \R^{n \times \ell} $, it holds that
\begin{equation*}
    \vec(X \ts Y) = (I_\ell \otimes X) \vec(Y) \in \R^{m \ts l},
\end{equation*}
see, e.g., \cite{PP12}.
\end{lemma}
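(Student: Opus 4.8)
The plan is to prove this standard identity directly from the column-stacking definition of $\vec$, which keeps the argument self-contained and avoids invoking any heavier machinery. First I would write $Y$ in terms of its columns as $Y = [\,y_1 \mid y_2 \mid \cdots \mid y_\ell\,]$ with each $y_j \in \R^n$, so that by definition $\vec(Y)$ is the vector in $\R^{n\ts\ell}$ obtained by stacking $y_1, \dots, y_\ell$ on top of one another.

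Next I would compute the left-hand side. Since matrix multiplication acts column-wise, we have $X \ts Y = [\,X \ts y_1 \mid X \ts y_2 \mid \cdots \mid X \ts y_\ell\,]$, and therefore $\vec(X \ts Y)$ is the stack of the vectors $X \ts y_1, \dots, X \ts y_\ell \in \R^m$, yielding a vector in $\R^{m\ts\ell}$.

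For the right-hand side, the key observation is that the Kronecker product $I_\ell \otimes X$ has block-diagonal structure: it is the $(m\ts\ell) \times (n\ts\ell)$ matrix whose $\ell$ diagonal blocks each equal $X$ and whose off-diagonal blocks vanish. Multiplying this block-diagonal matrix by the stacked vector $\vec(Y)$ then applies $X$ to each block $y_j$ independently, producing precisely the stack of $X \ts y_1, \dots, X \ts y_\ell$. Comparing with the previous paragraph shows that both sides agree entry by entry, which establishes the identity.

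I do not anticipate any genuine obstacle here, since this is a routine result; the only points demanding care are fixing the column-stacking convention consistently on both sides and correctly reading off the block-diagonal form of $I_\ell \otimes X$. As an alternative route, one could instead obtain the claim as a special case of the general rule $\vec(A \ts M \ts B) = (B^\top \otimes A)\ts\vec(M)$ by writing $X \ts Y = X \ts Y \ts I_\ell$ and using $I_\ell^\top = I_\ell$, but the direct column argument is cleaner and does not presuppose the more general formula.
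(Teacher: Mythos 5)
Your proposal is correct, but it differs from the paper in that the paper offers no proof at all: the lemma is stated with a pointer to the literature (the Matrix Cookbook, \cite{PP12}), and the authors simply take it as known. Your direct argument---decomposing $Y$ into columns $y_1, \dots, y_\ell$, observing that $X \ts Y = [\,X y_1 \mid \cdots \mid X y_\ell\,]$, and reading off that $I_\ell \otimes X$ is the block-diagonal matrix with $\ell$ copies of $X$, so that it acts on the stacked vector $\vec(Y)$ column-block by column-block---is a complete, self-contained elementary proof of the identity. It buys transparency at essentially no cost: the only delicate points are exactly the ones you flag, namely fixing the column-stacking convention and the convention that $A \otimes B$ has blocks $a_{ij} B$ (which is what makes $I_\ell \otimes X$, rather than $X \otimes I_\ell$, block diagonal). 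Your alternative route via the general rule $\vec(A \ts M \ts B) = (B^\top \otimes A) \vec(M)$ with $B = I_\ell$ is also valid, but it presupposes the more general identity, which is precisely what the paper's citation defers to; the direct column argument is the better choice for a self-contained write-up.
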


As a result, we can rewrite \eqref{eq:minimum conditions} as a system of linear equations
\begin{equation*}
    \renewcommand*{\arraystretch}{1.3}
    \left[\,
    \begin{matrix}[c|c|c]
        I_N \otimes \big(Z_2 \ts Z_2^\top\big) & I_N \otimes \big(Z_2 \ts Z_1^\top\big) & I_N \otimes \big(Z_2 \ts \mathds{1}_m\big) \\ \hline
        I_N \otimes \big(Z_1 \ts Z_2^\top\big) & I_N \otimes \big(Z_1 \ts Z_1^\top\big) & I_N \otimes \big(Z_1 \ts \mathds{1}_m\big) \\ \hline
        I_N \otimes \big(\mathds{1}_m^\top \ts Z_2^\top\big) & I_N \otimes \big(\mathds{1}_m^\top \ts Z_1^\top\big) & m \ts I_N
    \end{matrix}\,\right]
    \begin{bmatrix}
        \vec\big(A^\top\big) \\
        \vec\big(B^\top\big) \\
        \vec\big(C^\top\big)
    \end{bmatrix}
    =
    \begin{bmatrix}
        \vec\big(Z_2 \ts \dot{Z}^\top\big) \\
        \vec\big(Z_1 \ts \dot{Z}^\top\big) \\
        \vec\big(\mathds{1}_m^\top \ts \dot{Z}^\top\big)
    \end{bmatrix}
\end{equation*}
of size $ N^3 + N^2 + N $. Note, however, that the system can be decoupled into $ N $ systems of size $ N^2 + N + 1 $. Let $ A_\ell $, $ B_\ell $, and $ \dot{Z}_\ell $ denote the $ \ell $th rows of the matrices $ A $, $ B $, and $ \dot{Z} $, respectively, and let $ C_\ell $ be the $ \ell $th entry of the vector $ C $. We can then solve
\begin{equation} \label{eq:decomposed system}
    \renewcommand*{\arraystretch}{1.3}
    \underbrace{
    \left[\,
    \begin{matrix}[c|c|c]
        Z_2 \ts Z_2^\top & Z_2 \ts Z_1^\top & Z_2 \ts \mathds{1}_m \\ \hline
        Z_1 \ts Z_2^\top & Z_1 \ts Z_1^\top & Z_1 \ts \mathds{1}_m \\ \hline
        \mathds{1}_m^\top \ts Z_2^\top & \mathds{1}_m^\top \ts Z_1^\top & m
    \end{matrix}\,\right]}_{R}
    \underbrace{
    \begin{bmatrix}
        A_\ell^\top \\
        B_\ell^\top \\
        C_\ell
    \end{bmatrix}}_{v_\ell}
    =
    \underbrace{
    \begin{bmatrix}
        Z_2 \ts \dot{Z}_\ell^\top \\
        Z_1 \ts \dot{Z}_\ell^\top \\
        \mathds{1}_m^\top \ts \dot{Z}_\ell^\top
    \end{bmatrix}}_{s_\ell}
\end{equation}
for $ \ell = 1, \dots, N $. Since the quadratic embedding is in general not unique---even if we eliminate the redundant columns of the matrix $ A $---as shown in Example \ref{ex:guiding examples}, the matrix $ R $ is not necessarily invertible so that we define $ v_\ell = R^+ s_\ell $, where $ R^+ $ denotes the pseudoinverse of the symmetric matrix $ R $. That is, $ v_\ell $ is the solution with the smallest 2-norm. Solving the $ N $ systems of equations allows us to compute $ A $, $ B $, and $ C $ row-wise. We call this method \emph{QENDy}.

\begin{remark} \label{rem:regression problem}
Defining
\begin{equation*}
    U =
    \begin{bmatrix}
        Z_2^\top & Z_1^\top & \mathds{1}_m
    \end{bmatrix}
    \in \R^{m \times (N^2 + N + 1)}
\end{equation*}
so that $ R = U^\top U $ and $ s_\ell = U^\top \dot{Z}_\ell^\top $, we can view \eqref{eq:decomposed system} as the normal equation corresponding to the regression problem $ U \ts v_\ell = \dot{Z}_\ell^\top $, whose solution is given by $ v_\ell = U^+ \dot{Z}_\ell^\top $. Assuming $ m > N^2 + N + 1 $, computing the pseudoinverse of $ U $ rather than $ R $ is more expensive, but in general also more accurate. The computational complexity is then $ \mathcal{O}\big(m(N^2 + N + 1)^2\big) $.
\end{remark}

\begin{example}
Let us consider the systems along with the dictionaries defined in Example~\ref{ex:guiding examples}:
\begin{enumerate}[leftmargin=3.5ex, itemsep=0ex, topsep=0.5ex, label=\roman*)]
\item We generate eleven training data points by simulating the rational system for $ t \in [0, 5] $ using the initial condition $ x(0) = 1 $. To avoid too many redundant solutions, we set $ C = 0 $. Applying QENDy then yields
\begin{equation*}
    \dot{z} =
    \begin{bmatrix}
        -0.7 \ts z_1 \ts z_2 - 0.3 \ts z_1 \ts z_3 + 0.1 \ts z_2^2 - 0.1 \ts z_2 - 0.2 \ts z_3 \\
        z_2 \ts z_3 \\
        -z_2 \ts z_3 + 2 \ts z_3^2
    \end{bmatrix}
    \implies
    \dot{x} = -\frac{x}{1+x}.
\end{equation*}
We obtain a valid quadratic embedding and can also identify the governing equations (after simplifying the right-hand side) using \eqref{eq:projection of quadratic ODE}. It would be possible to represent $ \dot{z}_1 $ using fewer terms, this could be accomplished by adding sparsity constraints to the loss function.
\item For the nonlinear damped pendulum, we choose the damping coefficient $ c = 0.1 $. Even when the number of training data points is small, QENDy returns
\begin{equation*}
    \dot{z} =
    \begin{bmatrix}
        z_2 \\
        -0.1 \ts z_2 - z_3 \\
        z_2 \ts z_4 \\
        -z_2 \ts z_3
    \end{bmatrix}
    \implies
    \dot{x} =
    \begin{bmatrix}
        x_2 \\
        -0.1 \ts x_2 - \sin(x_1)
    \end{bmatrix}.
\end{equation*}
The convergence will be analyzed in more detail in Example~\ref{ex:convergence}. \exampleSymbol
\end{enumerate}

\end{example}

In general, we do not know the feature space mapping that results in a perfect quadratic embedding of the dynamics, but we will nevertheless obtain the best approximation for a given set of basis functions and training data points as we will show next. One of the main advantages of QENDy, compared to deep learning methods, is that it generalizes well to unseen data and also allows us to explicitly write the governing equations in terms of the basis functions.

\subsection{Convergence of QENDy}

Let us now analyze the convergence of QENDy in the infinite data limit, assuming that the training data points $ x^{(k)} $ are sampled from a (potentially unknown) distribution $ \mu $. In order to compute the $ \ell $th rows of $ A $, $ B $, and $ C $, we have to solve the system of linear equations \eqref{eq:decomposed system}. First, we divide both sides by the number of data points $ m $. Let $ (i, j) \mapsto \overline{ij} := N \ts (j-1) + i $ be the canonical mapping from $ \{1, \dots, N\} \times \{1, \dots, N\} $ to $ \{1, \dots, N^2\} $, then
\begin{equation*}
    \frac{1}{m} Z_2 \ts Z_2^\top = \frac{1}{m} \sum_{k=1}^{m} \big(z^{(k)} \otimes z^{(k)}\big) \big(z^{(k)} \otimes z^{(k)}\big)^\top
\end{equation*}
and the entry in row $ \overline{i_1 i_2} $ and column $ \overline{j_1 j_2} $ is
\begin{align*}
    \left[\frac{1}{m} Z_2 \ts Z_2^\top\right]_{\overline{i_1 i_2}\; \overline{j_1 j_2}}
        &= \frac{1}{m} \sum_{k=1}^m z_{i_1}^{(k)} \ts z_{i_2}^{(k)} \ts z_{j_1}^{(k)} \ts z_{j_2}^{(k)}
        = \frac{1}{m} \sum_{k=1}^m \phi_{i_1}\big(x^{(k)}\big) \phi_{i_2}\big(x^{(k)}\big) \phi_{j_1}\big(x^{(k)}\big) \phi_{j_2}\big(x^{(k)}\big) \\
        &\hspace*{-1ex}\underset{\scriptscriptstyle m \rightarrow \infty}{\longrightarrow} \int \phi_{i_1}(x) \ts \phi_{i_2}(x) \ts \phi_{j_1}(x) \ts \phi_{j_2}(x) \ts \mathrm{d}\mu(x)
        = \innerprod{\phi_{i_1} \ts \phi_{i_2}}{\phi_{j_1} \ts \phi_{j_2}}_\mu.
\end{align*}
Analogously, we obtain
\begin{alignat*}{4}
    &\left[\frac{1}{m} Z_2 \ts Z_1^\top\right]_{\overline{i_1 i_2}\; j} && \underset{\scriptscriptstyle m \rightarrow \infty}{\longrightarrow} \innerprod{\phi_{i_1} \ts \phi_{i_2}}{\phi_{j}}_\mu,
    &~~\left[\frac{1}{m} Z_2 \ts \mathds{1}_m\right]_{\overline{i_1 i_2}} &\underset{\scriptscriptstyle m \rightarrow \infty}{\longrightarrow} \innerprod{\phi_{i_1} \ts \phi_{i_2}}{\mathds{1}}_\mu, \\
    &\left[\frac{1}{m} Z_1 \ts Z_1^\top\right]_{ij} &&\underset{\scriptscriptstyle m \rightarrow \infty}{\longrightarrow} \innerprod{\phi_i}{\phi_j}_\mu,
    &\left[\frac{1}{m} Z_1 \ts \mathds{1}_m\right]_{i}~~ &\underset{\scriptscriptstyle m \rightarrow \infty}{\longrightarrow} \innerprod{\phi_i}{\mathds{1}}_\mu,
\end{alignat*}
where $ \mathds{1}(x) \equiv 1 $ is the constant one function. For the right-hand side, we have
\begin{equation*}
    \left[\frac{1}{m} Z_2 \ts \dot{Z}_\ell^\top\right]_{\overline{i_1 i_2}\,} \underset{\scriptscriptstyle m \rightarrow \infty}{\longrightarrow} \innerprod{\phi_{i_1} \ts \phi_{i_2}}{\dot{z}_\ell}_\mu = \innerprod{\phi_{i_1} \ts \phi_{i_2}}{\nabla \phi_\ell \vdot F}_\mu,
\end{equation*}
and similarly
\begin{alignat*}{4}
    \left[\frac{1}{m} Z_1 \ts \dot{Z}_\ell^\top\right]_{i\,\,} && \underset{\scriptscriptstyle m \rightarrow \infty}{\longrightarrow} \innerprod{\phi_i}{\nabla \phi_\ell \vdot F}_\mu, ~~
    \frac{1}{m} \mathds{1}_m^\top \ts \dot{Z}_\ell^\top && \underset{\scriptscriptstyle m \rightarrow \infty}{\longrightarrow} \innerprod{\mathds{1}}{\nabla \phi_\ell \vdot F}_\mu.
\end{alignat*}
Overall, this results in the system of linear equations
\begin{equation*}
    \renewcommand*{\arraystretch}{1.3}
    \underbrace{
    \left[\,
    \begin{matrix}[c|c|c]
        \innerprod{\phi_{i_1} \ts \phi_{i_2}}{\phi_{j_1} \ts \phi_{j_2}}_\mu & \innerprod{\phi_{i_1} \ts \phi_{i_2}}{\phi_{j}}_\mu & \innerprod{\phi_{i_1} \ts \phi_{i_2}}{\mathds{1}}_\mu \\ \hline
        \innerprod{\phi_i}{\phi_{j_1} \ts \phi_{j_2}}_\mu & \innerprod{\phi_i}{\phi_j}_\mu & \innerprod{\phi_i}{\mathds{1}}_\mu \\ \hline
        \innerprod{\mathds{1}}{\phi_{j_1} \ts \phi_{j_2}}_\mu & \innerprod{\mathds{1}}{\phi_j}_\mu & \innerprod{\mathds{1}}{\mathds{1}}_\mu
    \end{matrix}\,\right]}_{R^*}
    \underbrace{
    \begin{bmatrix}
        A_\ell^\top \\
        B_\ell^\top \\
        C_\ell
    \end{bmatrix}}_{v_\ell^*}
    =
    \underbrace{
    \begin{bmatrix}
        \innerprod{\phi_{i_1} \ts \phi_{i_2}}{\nabla \phi_\ell \vdot F} \\
        \innerprod{\phi_i}{\nabla \phi_\ell \vdot F} \\
        \innerprod{\mathds{1}}{\nabla \phi_\ell \vdot F}
    \end{bmatrix}}_{s_\ell^*}.
\end{equation*}
In the infinite data limit, we can thus view QENDy as $ N $ continuous best approximations of $ \nabla \phi_\ell \vdot F $, where $ \ell = 1, \dots, N $, with respect to the $ \mu $-weighted inner product using the (linearly dependent) basis functions
\begin{equation*}
    \{ \phi_{i_1} \ts \phi_{i_2} \}_{i_1, i_2=1}^N \cup \{ \phi_i \}_{i=1}^N \cup \{\mathds{1} \},
\end{equation*}
which we will call the \emph{quadratically augmented basis} in what follows. The corresponding augmented feature map will be denoted by $ \overline{\phi} \colon \R^n \to \R^{N^2 + N + 1} $. QENDy itself is a discrete best approximation. More details about discrete and continuous best approximation problems can be found in Appendix~\ref{app:Best approximation}.

\begin{example} \label{ex:convergence}
In order to corroborate the above convergence results, let us consider again the damped pendulum and generate $ m $ uniformly distributed (since this simplifies the convergence analysis) training data points $ x^{(k)} $ in $ [-1, 1] \times [-1, 1] $, for which we then compute the exact derivatives $ \dot{x}^{(k)} $. Using the feature space mapping described in Example~\ref{ex:guiding examples}, we compute the matrix $ R $ and the vectors $ s_\ell $ using QENDy and $ R^* $ and $ s_\ell^* $ by computing the required integrals analytically. We then compare the entries element-wise and calculate the mean difference averaged over 100 runs. Additionally, we simulate the identified models for $ m = 6 $, $ m = 8 $, and $ m = 10 $ and compare the trajectories with the true solution. The results, shown in Figure~\ref{fig:convergence}, illustrate that QENDy indeed converges to the best approximation and that the identified models are good approximations of the true dynamics, even for small values of $ m $. \exampleSymbol

\begin{figure}
    \centering
    \begin{minipage}{0.45\linewidth}
        \centering
        \subfiguretitle{~~(a)}
        \vspace*{0.5ex}
        \includegraphics[width=0.95\linewidth]{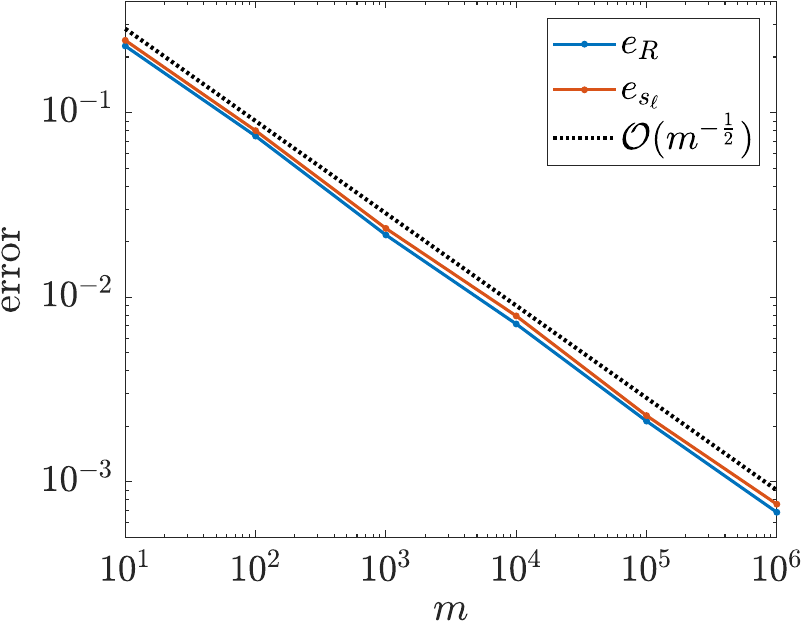}
    \end{minipage}
    \begin{minipage}{0.5\linewidth}
        \centering
        \subfiguretitle{(b)\hspace*{6em}}
        \vspace*{0.9ex}
        \includegraphics[width=0.995\linewidth]{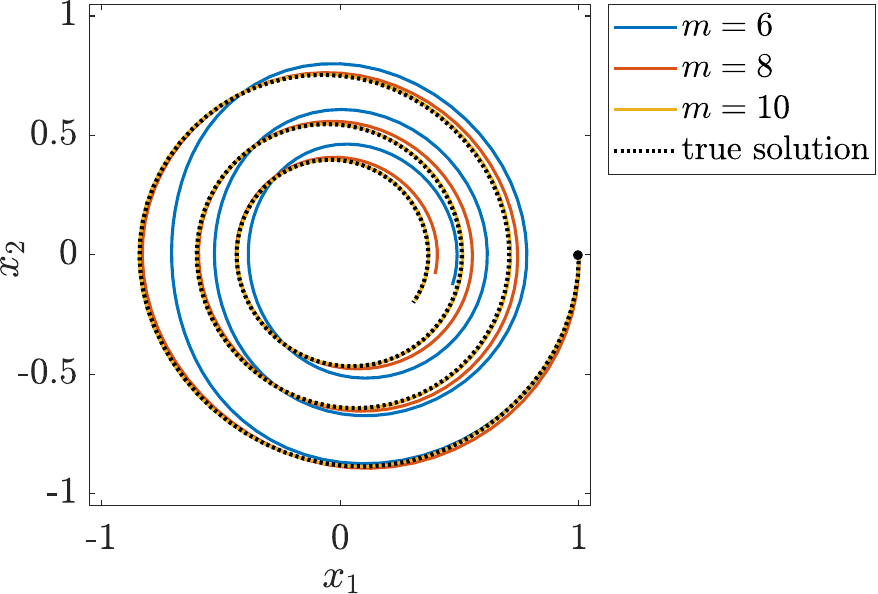}
    \end{minipage}
    \caption{(a) Average difference between the entries of $ R $ and $ R^* $ (denoted by $ e_R $) as well as $ s_\ell $ and $ s_\ell^* $ (denoted by $ e_{s_\ell} $) as a function of the number of training data points $ m $, illustrating the expected $ \mathcal{O}(m^{-\frac{1}{2}}) $ Monte Carlo convergence. (b) Simulation of the learned quadratic models for $ m = 6 $, $ m = 8 $, and $ m = 10 $. The black dot represents the initial condition $ x = [1, 0]^\top $.}
    \label{fig:convergence}
\end{figure}

\end{example}

The continuous best approximation is not required to correctly identify the governing equations. In fact, we typically just need a few independent observations in order to estimate $ A $, $ B $, and $ C $, see Remark~\ref{rem:regression problem}. A more detailed analysis of the prediction error can be found in Example~\ref{ex:QENDy vs. SINDy}.

\subsection{Regularization and stability}

For autonomous quadratic systems, i.e., $C = 0$ in \eqref{eq:quadratic ODE}, the stability radius of the equilibrium at the origin  is inversely proportional to the norm of the matrix $A$. This result relies on Lyapunov stability theory and requires that the matrix $B$ is Hurwitz stable, i.e., all the eigenvalues of $B$ have negative real parts \cite{Kra21, SKP23}. The latter condition ensures that the origin is locally asymptotically stable in the first place. The paper \cite{SKP23} extends this result to cubic systems and leverages it to propose a regularization of the operator inference method \cite{PW16} for non-intrusive reduced-order modeling that promotes models with a large stability radius by penalizing the Frobenius norms of the learned quadratic and cubic operators. This regularization does, however, not guarantee that the learned matrix $B$ is Hurwitz stable. Instead, it relies on an eigen-reflection post-processing step to enforce Hurwitz stability.

Hurwitz stability of the matrix $B$ can be incorporated into the optimization process to learn quadratic models that are stable by design by exploiting the fact that any Hurwitz stable matrix $B$ can be written as a product of generalized negative definite matrix $P$ and a symmetric positive definite matrix $Q$ \cite{DP98}. This idea is leveraged in \cite{GPB23} and applied to the operator inference method and SINDy, where in particular the parametrization
\begin{equation*}
	P =  \left(J - J^T\right) - R \ts R^\top, \quad  \text{and } \quad Q = L \ts L^\top
\end{equation*}
is used. Here, $ J $, $ R $, and $ L $ are general matrices of the same dimensions as $ B $, which ensures that $ Q $ is semi-positive definite only. It is important to note, however, that the resulting optimization problem is nonlinear, non-convex, and can only be solved using iterative methods, whereas the $ \norm{A}_F $-penalized operator inference optimization problem has an analytic solution.

In this work, we follow the same approach as in \cite{SKP23} to promote stability in models learned by QENDy. In particular, we modify the loss function \eqref{eq:loss function} as follows:
	\begin{equation} \label{eq:loss function_regularized}
		L(A, B, C) = \norm{\dot{Z} - A \ts Z_2 - B \ts Z_1 - C \ts \mathds{1}_m^\top}_F^2 + \lambda \norm{A}_F^2,
	\end{equation}
where $\lambda \geq 0$ is a regularization parameter. Following the same approach as in Section \ref{sec:derivation_QENDy}, we obtain the system of linear equations
	\begin{equation} \label{eq:QENDy_regularized}
		\renewcommand*{\arraystretch}{1.3}
		\underbrace{
			\left[\,
			\begin{matrix}[c|c|c]
				Z_2 \ts Z_2^\top + \lambda I_N & Z_2 \ts Z_1^\top & Z_2 \ts \mathds{1}_m \\ \hline
				Z_1 \ts Z_2^\top & Z_1 \ts Z_1^\top & Z_1 \ts \mathds{1}_m \\ \hline
				\mathds{1}_m^\top \ts Z_2^\top & \mathds{1}_m^\top \ts Z_1^\top & m
			\end{matrix}\,\right]}_{R_\lambda}
		\underbrace{
			\begin{bmatrix}
				A_\ell^\top \\
				B_\ell^\top \\
				C_\ell
		\end{bmatrix}}_{v_\ell}
		=
		\underbrace{
			\begin{bmatrix}
				Z_2 \ts \dot{Z}_\ell^\top \\
				Z_1 \ts \dot{Z}_\ell^\top \\
				\mathds{1}_m^\top \ts \dot{Z}_\ell^\top
		\end{bmatrix}}_{s_\ell},
	\end{equation}
which we solve to obtain  $ v_\ell = R_\lambda^+ s_\ell$.

It is important to note that although we do not ensure that $B$ is Hurwitz stable, we observed an improvement in stability using the regularization in \eqref{eq:QENDy_regularized} in cases where the dictionary is not expressive enough to represent the dynamics quadratically. Ensuring the stability of models learned from data, either quadratic or not, is an open problem and will be considered in future work.

\subsection{Comparison with SINDy}

Let us compare QENDy with the well-known method SINDy \cite{BPK16}. Although the original SINDy formulation was limited to ordinary differential equations, similar methods have been developed for stochastic differential equations \cite{BNC18} and partial differential equations \cite{RBPK17}. Given again training data of the form $ \{ (x^{(k)}, \dot{x}^{(k)}) \}_{k=1}^m $ and a set of basis functions $ \{ \phi_1, \dots, \phi_N \} $, we define
\begin{align*}
    \Phi_X &=
    \begin{bmatrix}
        \phi(x^{(1)}) & \phi(x^{(2)}) & \dots & \phi(x^{(m)})
    \end{bmatrix}
    \in \R^{N \times m}, \\
    \dot{X} &=
    \begin{bmatrix}
        \dot{x}^{(1)} & \dot{x}^{(2)} & \dots & \dot{x}^{(m)}
    \end{bmatrix}
    \in \R^{n \times m},
\end{align*}
i.e., $ \Phi_X = Z_1 $. The SINDy loss function is then given by
\begin{equation*}
    L(\Xi) = \norm{\dot{X} - \Xi \ts \Phi_X}_F^2,
\end{equation*}
where $ \Xi \in \R^{n \times N} $. In addition to minimizing the regression error, it is also possible to penalize non-sparse solutions $ \Xi $ in order to find models that are as simple as possible. The identified system is then defined by the optimal solution $ \Xi $ that minimizes the loss function, i.e.,
\begin{equation*}
    \dot{x} = \Xi \ts \phi(x).
\end{equation*}
Neglecting the sparsity constraints and following the same steps as above, we obtain the system of equations
\begin{equation*}
    \Phi_X \ts \Phi_X^\top \Xi^\top = \Phi_X \ts \dot{X}^\top,
\end{equation*}
which we decouple into $ n $ problems of the form
\begin{equation*}
    \Phi_X \ts \Phi_X^\top \Xi_\ell^\top = \Phi_X \ts \dot{X}_\ell^\top,
\end{equation*}
where $ \Xi_\ell $ and $ \dot{X}_\ell $ denote the $ \ell $th rows of $ \Xi $ and $ \dot{X} $, respectively. Dividing again both sides by the number of data points $ m $ and letting $ m $ go to infinity yields
\begin{align*}
    \left[\frac{1}{m} \Phi_X \ts \Phi_X^\top\right]_{ij} &\underset{\scriptscriptstyle m \rightarrow \infty}{\longrightarrow} \innerprod{\phi_i}{\phi_j}_\mu, \\
    \left[\frac{1}{m} \Phi_X \ts \dot{X}_\ell^\top\right]_{i} &\underset{\scriptscriptstyle m \rightarrow \infty}{\longrightarrow} \innerprod{\phi_i}{F_\ell}_\mu.
\end{align*}
That is, in the infinite data limit we are solving the system of equations
\begin{equation*}
    \begin{bmatrix}
        \innerprod{\phi_i}{\phi_j}_\mu
    \end{bmatrix}
    \Xi_\ell^\top
    =
    \begin{bmatrix}
        \innerprod{\phi_i}{F_\ell \vphantom{|_j}}_\mu
    \end{bmatrix}
\end{equation*}
for $ \ell = 1, \dots, n $ in the least-squares sense. This illustrates that SINDy is a data-driven best approximation of $ F $ using the basis functions $ \{ \phi_i \}_{i=1}^N $.

\begin{remark}
Using properties of the pseudoinverse, we could have also directly written
\begin{equation*}
    \Xi = \dot{X} \ts \Phi_X^+ = \big(\dot{X} \Phi_X^\top\big) \big(\Phi_X \Psi_X^\top\big)^+,
\end{equation*}
which results in the same system of equations. Our goal, however, was to highlight similarities with the quadratic embedding. Directly computing the pseudoinverse of $ \Phi_X $ instead of solving the normal equation is more expensive, but also more accurate, see Remark \ref{rem:regression problem}. The computational complexity is $ \mathcal{O}\big(m \ts N^2\big) $.
\end{remark}

\begin{example} \label{ex:QENDy vs. SINDy}

\begin{figure}
    \centering
    \begin{minipage}{0.325\linewidth}
        \centering
        \subfiguretitle{~~(a)}
        \includegraphics[width=0.98\linewidth]{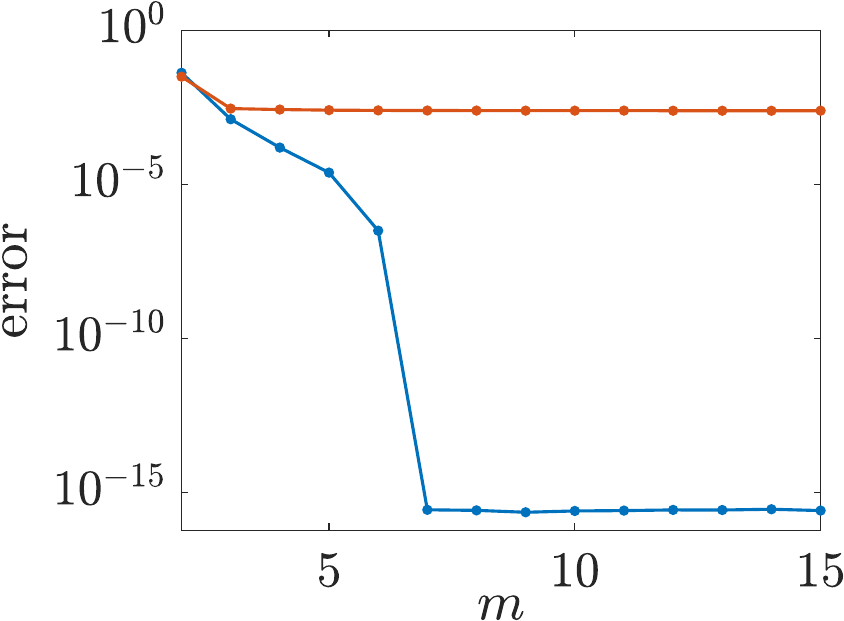}
    \end{minipage}
    \begin{minipage}{0.325\linewidth}
        \centering
        \subfiguretitle{~~(b)}
        \includegraphics[width=0.98\linewidth]{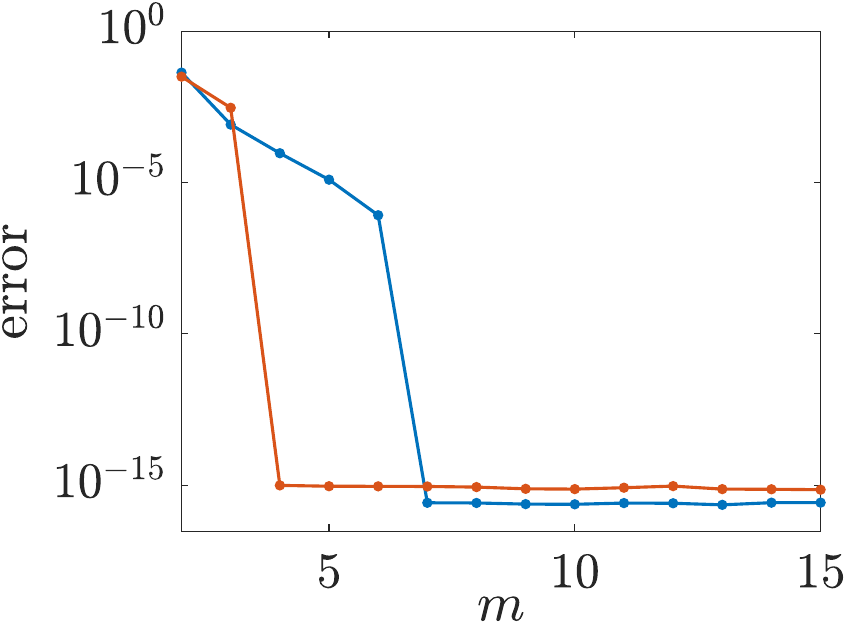}
    \end{minipage}
    \begin{minipage}{0.325\linewidth}
        \centering
        \subfiguretitle{~~(c)}
        \includegraphics[width=0.98\linewidth]{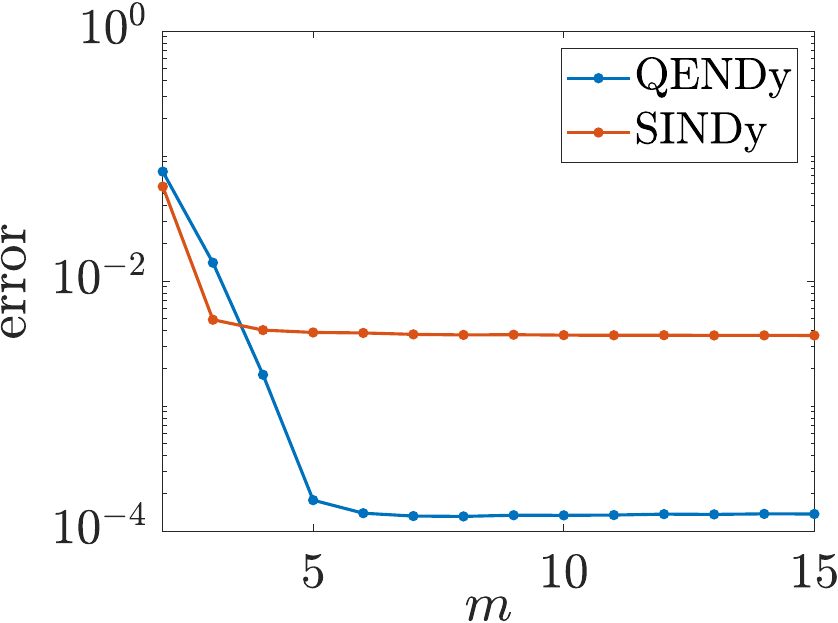}
    \end{minipage}
    \caption{Comparison of QENDy and SINDy using (a) the basis functions defined in Example~\ref{ex:guiding examples}, (b) an extended dictionary comprising four basis functions so that both methods correctly identify the system, (c) monomials of order up to two.}
    \label{fig:QENDy vs. SINDy}
\end{figure}

Let us consider the simple systems defined in Example~\ref{ex:guiding examples}:
\begin{enumerate}[leftmargin=3.5ex, itemsep=0ex, topsep=0.5ex, label=\roman*)]
\item SINDy would not be able to find the governing equations of the rational system using the three selected basis functions since the right-hand side cannot be represented as a linear combination of these functions. The approximation error for different values of $ m $, defined to be the average difference between the predicted derivative and the true derivative evaluated in $ 100 $ test data points, is shown in Figure~\ref{fig:QENDy vs. SINDy}\ts(a). While QENDy identifies the correct dynamics (up to machine precision), SINDy only approximates the dynamics. If we add the function $ \frac{x^2}{(1+x)^2} $ to the dictionary, then both QENDy and SINDy correctly identify the system as shown in Figure~\ref{fig:QENDy vs. SINDy}\ts(b). Choosing a dictionary comprising only monomials of order up to two, both methods are not able to learn the true dynamics, see Figure~\ref{fig:QENDy vs. SINDy}\ts(c). In this case, QENDy results in smaller approximation errors due to the higher-dimensional feature space, provided we have enough observations to correctly estimate all parameters.
\item The governing equations of the nonlinear pendulum can be identified using SINDy with the chosen dictionary. In this case, we obtain
\begin{equation}
    \dot{x} =
    \underbrace{
    \begin{bmatrix}
        0 & 1 & 0  & 0 \\
        0 & -0.1 & -1 & 0
    \end{bmatrix}}_{\Xi}
    \begin{bmatrix}
        x_1 \\
        x_2 \\
        \sin(x_1) \\
        \cos(x_1)
    \end{bmatrix}. \tag*{\exampleSymbol}
\end{equation}
\end{enumerate}
\end{example}

\subsection{Comparison with gEDMD}

There are also closely related methods for learning the governing equations that are based on the Koopman generator \cite{MauGon16, KNPNCS20}. We will now compare QENDy and gEDMD \cite{KNPNCS20}, which aims at approximating the Koopman generator associated with ordinary or stochastic differential equations. Let $ \dot{\phi}(x) = (\mathcal{L} \phi)(x) = J(x) F(x) $, where the Koopman generator associated with \eqref{eq:ODE} is again applied component-wise. We define the matrices
\begin{align*}
    \Phi_X &=
    \begin{bmatrix}
        \phi(x^{(1)}) & \phi(x^{(2)}) & \dots  & \phi(x^{(m)})
    \end{bmatrix}
    \in \R^{N \times m}, \\
    \dot{\Phi}_X &=
    \begin{bmatrix}
        \dot{\phi}(x^{(1)}) & \dot{\phi}(x^{(2)}) & \dots & \dot{\phi}(x^{(m)}) \\
    \end{bmatrix}
    \in \R^{N \times m}.
\end{align*}
The loss function is in this case defined by
\begin{equation*}
    L(\Theta) = \norm{\dot{\Phi}_X - \Theta \ts \Phi_X}_F^2.
\end{equation*}
Setting the derivative of the loss function with respect to the matrix $ \Theta $ to zero results in the system of equations
\begin{equation*}
    \Phi_X \ts \Phi_X^\top \Theta^\top = \Phi_X \ts \dot{\Phi}_X^\top.
\end{equation*}
In the infinite data limit, we obtain
\begin{align*}
    \left[\frac{1}{m} \Phi_X \ts \Phi_X^\top\right]_{ij} &\underset{\scriptscriptstyle m \rightarrow \infty}{\longrightarrow} \innerprod{\phi_i}{\phi_j}_\mu, \\
    \left[\frac{1}{m} \Phi_X \ts \dot{\Phi}_X^\top\right]_{ij} &\underset{\scriptscriptstyle m \rightarrow \infty}{\longrightarrow} \innerprod{\phi_i}{\mathcal{L} \phi_j}_\mu,
\end{align*}
so that $ \Theta^\top $ can be regarded as the matrix representation of the Koopman generator projected onto the space spanned by the basis functions $ \phi $. In other words, gEDMD computes a Galerkin approximation of the operator $ \mathcal{L} $, where the required integrals are estimated from data. Given a function $ f(x) = \alpha^\top \phi(x) $, we have
\begin{equation*}
    \mathcal{L} f(x) \approx \alpha^\top \Theta \ts \phi(x).
\end{equation*}
For the full-state observable $ g(x) = x = G \ts \phi(x) $, this implies
\begin{equation*}
    \dot{x} = F(x) = \mathcal{L} g(x) \approx G \ts \Theta \ts \phi(x) = \Xi \ts \phi(x),
\end{equation*}
which shows that gEDMD can be used for system identification, see \cite{KNPNCS20} for details. However, gEDMD also works for stochastic differential equations and allows us to compute spectral properties of the Koopman generator and to decompose the right-hand side $ F $ into eigenvalues, eigenfunctions, and associated modes. To demonstrate that QENDy and gEDMD are related as well, we now approximate the Koopman generator using the augmented basis given by $ \overline{\phi} $. It then holds that $ R = \overline{\Phi}_X \ts \overline{\Phi}_X^\top $ and $ s_\ell $ is the $ (N^2+\ell) $th column of the matrix $ \overline{\Phi}_X \ts \dot{\overline{\Phi}}{\vphantom{\overline{\Phi}}}_X^\top $. The Koopman generator approximation with respect to the quadratically augmented basis contains more information about the system, namely the action of the Koopman generator applied to products of basis functions.

\subsection{Comparison with quadratic embeddings using deep learning} \label{sec:dl-quadembed}

QENDy uses a dictionary containing a predefined set of basis functions to map a nonlinear dynamical system to an embedding space where the dynamics can be expressed in a quadratic form, either exactly or approximately. A different approach, proposed in \cite{GB24}, is to define an embedding map, $ x \mapsto z = \phi_{w_\text{enc}}(x) $, using an encoder neural network and to reconstruct $ x $ from $ z $ using a decoder neural network, $ z \mapsto x = \psi_{w_\text{dec}}(z) $. The goal is then to optimize the parameters $ w_\text{enc} $ and $ w_\text{dec} $ of the encoder and decoder concurrently with $ A $, $ B $, and $ C $ such that $ z $ satisfies both \eqref{eq:quadratic ODE} and the encoder--decoder reconstruction property $\psi_{w_\text{dec}}\left( \phi_{w_\text{enc}}(x) \right) = x$. This leads to the following loss functions that need to be optimized simultaneously:
\begin{enumerate}[leftmargin=3.5ex, itemsep=0ex, topsep=0.5ex, label=\roman*)]
\item Encoder--decoder reconstruction:
\begin{equation} \label{eq:loss 1 quadembed}
	\mathcal{L}_\text{encdec} = \sum_{k=1}^{m}\left\| x^{(k)} - \psi_{w_\text{dec}}\big(\phi_{w_\text{enc}}\big( x^{(k)}\big) \big) \right\|^2.
\end{equation}
\item Quadratic dynamics for $ z $ given $ \dot{x} $:
\begin{equation} \label{eq:loss 2 quadembed}
	\mathcal{L}_{\dot{z}\dot{x}} = \sum_{k=1}^{m}\left\| \dot{z}^{(k)} - A \big(z^{(k)} \otimes z^{(k)}\big) - B \ts z^{(k)} - C \right\|^2,
\end{equation}
where $z^{(k)} = \phi_{w_\text{enc}}\left(x^{(k)}\right)$,  $\dot{z}^{(k)} = J_{\phi_{w_\text{enc}}} \left(x^{(k)}\right) \, \dot{x}^{(k)}$, and $J_{\phi_{w_\text{enc}}} \left(x^{(k)}\right)$ is the Jacobian matrix of the encoder map $\phi_{w_\text{enc}}$ at $x^{(k)}$ evaluated via automatic differentiation.
\item Recovery of $\dot{x}$ from the quadratic expression of $\dot{z}$:
\begin{equation} \label{eq:loss 3 quadembed}
	\mathcal{L}_{\dot{x}\dot{z}} = \sum_{k=1}^{m}\left\| \dot{x}^{(k)} - J_{\psi_{w_\text{dec}}} \big(z^{(k)}\big) \big( A \big(z^{(k)} \otimes z^{(k)}\big) + B \ts z^{(k)} + C \big) \right\|^2.
\end{equation}
\end{enumerate}
In other words, $ \mathcal{L}_\text{encdec} $ ensures that the encoder and decoder networks are consistent, i.e., the input to the encoder can be recovered by passing the associated output through the decoder, $ \mathcal{L}_{\dot{z}\dot{x}} $ enforces the quadratic structure of the system implied by the encoder map, and $ \mathcal{L}_{\dot{x}\dot{z}} $ is a requirement that the original system can be recovered from this quadratic representation by applying the decoder map. Overall, the goal is to solve the optimization problem
\begin{equation*}
	A, B, C, w_\text{enc}, w_\text{dec} = \underset{\hat{A}, \hat{B}, \hat{C}, \hat{w}_\text{enc}, \hat{w}_\text{dec}}{\argmin} L\left(\hat{A} ,\, \hat{B},\, \hat{C},\, \hat{w}_\text{enc},\, \hat{w}_\text{dec}\right),
\end{equation*}
where $ L = \lambda_1\, \mathcal{L}_\text{encdec} + \lambda_2\, \mathcal{L}_{\dot{x}\dot{z}} + \lambda_3\, \mathcal{L}_{\dot{z}\dot{x}} $ and $\lambda_i, \: i \in \{1,2,3\} $, are non-negative hyperparameters that need to be tuned. The norm $\norm{\cdot} $ used in \eqref{eq:loss 1 quadembed}--\eqref{eq:loss 3 quadembed} is defined to be $ \norm{\cdot} = \norm{\cdot}_2 + \kappa \norm{\cdot}_1 $, where $ \kappa $ is typically between $ 0 $ and $ 1 $. It is important to note that in the implementation of the method, the authors of \cite{GB24} use iterative hard thresholding to promote sparsity in the learned matrices $ A $ and $ B $. Therefore, the method requires tuning five hyperparameters (the $ \lambda_i $, $ \kappa $, and the hard thresholding level) that all have an effect on its performance. The structure and size of the encoder and decoder networks, and optimization-related hyperparameters such as the learning rate and the batch size also play a pivotal role on the performance of the method in practice.

\subsection{Summary}

The above analysis shows that QENDy and SINDy are closely related, the quadratic embedding just generates a higher-dimensional feature space by using not only the basis functions themselves but also products of basis functions. However, QENDy approximates $ \nabla \phi_\ell \vdot F $ for $ \ell = 1, \dots, N $, from which we then extract $ F $ using~\eqref{eq:projection of quadratic ODE}, whereas SINDy approximates $ F_\ell $ for $ \ell = 1, \dots, n $. Furthermore, QENDy learns a quadratic system, while the SINDy model still requires evaluating the typically nonlinear basis functions. The deep learning approach identifies not only $ A $, $ B $, and $ C $, but also the nonlinear transformation itself, which makes it more flexible, but less interpretable. The learned models, however, do often not generalize to unseen data, i.e., they are suitable for interpolation but not necessarily extrapolation. QENDy, on the other hand, identifies a globally defined model whose accuracy depends strongly on the chosen dictionary.

\section{Numerical results}
\label{sec:Numerical results}

In this section, we will present numerical results for various benchmark problems.

\subsection{Modified Thomas attractor}

Consider the system
\begin{equation*}
    \begin{bmatrix}
        \dot{x}_1 \\
        \dot{x}_2 \\
        \dot{x}_3
    \end{bmatrix}
    =
    \begin{bmatrix}
        \sin(x_2) - \alpha \ts x_1 - \beta \ts x_2 \ts \cos(x_1) \\
        \sin(x_3) - \alpha \ts x_2 - \beta \ts x_3 \ts \cos(x_2) \\
        \sin(x_1) - \alpha \ts x_3 - \beta \ts x_1 \ts \cos(x_3)
    \end{bmatrix},
\end{equation*}
where $ \alpha $ and $ \beta $ are parameters. If $ \beta = 0 $, we obtain the original Thomas attractor \cite{Thomas99} as a special case. We consider two different test cases: (a)~$ \alpha = 0.2 $ and $ \beta = 0 $ and (b)~$ \alpha = 0.25 $ and $ \beta = 0.15 $. Trajectories for both parameter settings are shown in Figure~\ref{fig:Thomas attractor}. We then define
\begin{equation} \label{eq:Thomas embedding}
    \renewcommand*{\arraystretch}{1.05}
    z =
    \begin{bmatrix}
        x_1 \\
        x_2 \\
        x_3 \\
        \sin(x_1) \\
        \sin(x_2) \\
        \sin(x_3) \\
        \cos(x_1) \\
        \cos(x_2) \\
        \cos(x_3)
    \end{bmatrix}
    \implies
    \dot{z} =
    \begin{bmatrix}
        z_5 - \alpha \ts z_1 - \beta \ts z_2 \ts z_7 \\
        z_6 - \alpha \ts z_2 - \beta \ts z_3 \ts z_8 \\
        z_4 - \alpha \ts z_3 - \beta \ts z_1 \ts z_9 \\
        z_5 \ts z_7 - \alpha \ts z_1 \ts z_7 - \beta \ts z_2 \ts z_7^2 \\
        z_6 \ts z_8 - \alpha \ts z_2 \ts z_8 - \beta \ts z_3 \ts z_8^2 \\
        z_4 \ts z_9 - \alpha \ts z_3 \ts z_9 - \beta \ts z_1 \ts z_9^2 \\
        -z_4 \ts z_5 + \alpha \ts z_1 \ts z_4 + \beta \ts z_2 \ts z_4 \ts z_7 \\
        -z_5 \ts z_6 + \alpha \ts z_2 \ts z_5 + \beta \ts z_3 \ts z_5 \ts z_8 \\
        -z_4 \ts z_6 + \alpha \ts z_3 \ts z_6 + \beta \ts z_1 \ts z_6 \ts z_9
    \end{bmatrix}.
\end{equation}
That is, if $ \beta = 0 $, the resulting system is quadratic in the embedding space. If, on the other hand, $ \beta \ne 0 $, the first three terms are still quadratic, but the remaining equations are of order three and would require introducing additional basis functions, see Appendix~\ref{app:Modified Thomas attractor}.

\begin{figure}
    \centering
    \begin{minipage}{0.45\linewidth}
        \centering
        \subfiguretitle{(a)}
        \vspace*{0.5ex}
        \includegraphics[width=\linewidth]{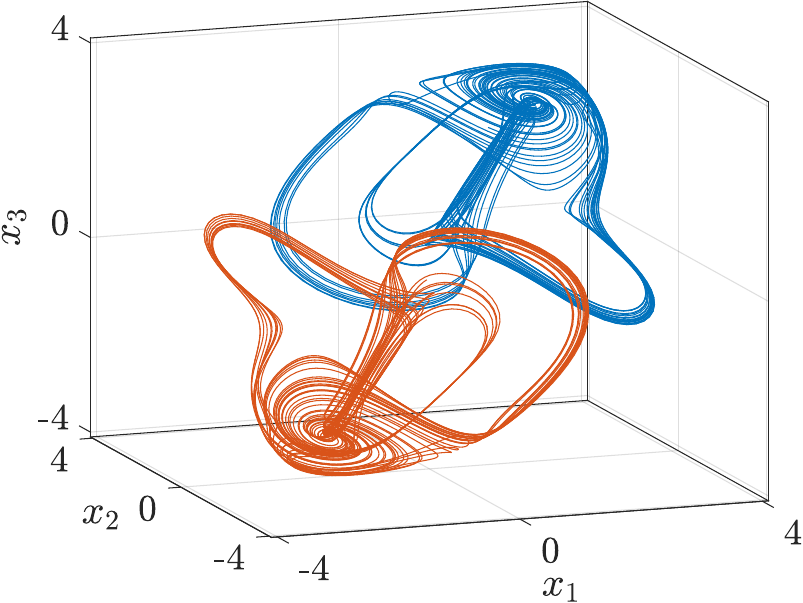}
    \end{minipage}
    \begin{minipage}{0.45\linewidth}
        \centering
        \subfiguretitle{(b)}
        \vspace*{0.5ex}
        \includegraphics[width=\linewidth]{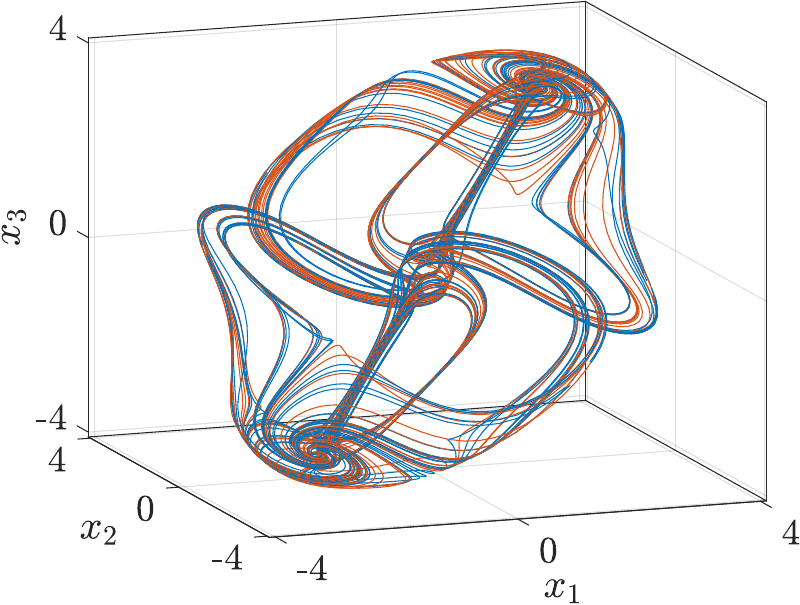}
    \end{minipage}
    \caption{Simulation of the modified Thomas system with (a) $ \alpha = 0.2 $ and $ \beta = 0 $ and (b) $ \alpha = 0.25 $ and $ \beta = 0.15 $ using two randomly generated initial conditions.}
    \label{fig:Thomas attractor}
\end{figure}

\paragraph{First test case.}

In order to learn the quadratic embedding of the dynamics, we generate one long trajectory consisting of 1000 training data points by simulating the Thomas system with initial condition $ x_0 = [1, -1, 0]^\top $ for $ t \in [0, 100] $. We then compute the corresponding time derivatives and apply QENDy. The resulting matrices $ A $ and $ B $ are sparse and the vector $ C $ is zero as shown in Figure~\ref{fig:Thomas system}\ts(a).  We simulate the quadratic model identified by QENDy using the initial condition $ x_0 = [0, 1, 1]^\top $ and compare the trajectory with the corresponding SINDy model and the true dynamics. The numerical results are shown in Figure~\ref{fig:Thomas system}\ts(c). QENDy successfully learns the correct quadratic embedding and the identified model is a very good approximation of the original dynamics. However, since the system is chaotic, even small numerical errors will eventually lead to large deviations. SINDy also works in this case as the right-hand side can be written as a linear combination of the basis functions, provided that $ \beta = 0 $.

\paragraph{Second test case.}

Although the right-hand side of the modified Thomas system can be written in terms of the quadratically augmented basis, the chosen basis functions are not sufficient for recasting the system as a quadratic model as shown in \eqref{eq:Thomas embedding}. Applying again QENDy, the matrices $ A $ and $ B $ and the vector $ C $ are not sparse anymore, see Figure~\ref{fig:Thomas system}\ts(b). Also SINDy with the chosen dictionary fails to correctly identify the system. Both methods nevertheless allow us to learn approximations of the true dynamics as illustrated in Figure~\ref{fig:Thomas system}\ts(d). We could in this case define
\begin{equation*}
    \dot{x} = G \ts A (\phi(x) \otimes \phi(x)) + G \ts B \ts \phi(x) + G \ts C,
\end{equation*}
which would be the correct model. This can be viewed as a combination of QENDy and SINDy, where we use the quadratically augmented basis to learn the right-hand side instead of a quadratic embedding. In order to obtain a valid quadratic embedding, we would have to choose a larger dictionary.

\begin{remark}
Additionally, we also applied the deep learning approach to both test cases. For the encoder mapping, we use a neural network with three fully-connected hidden layers of $64$ units each and ELU activation functions. We set the embedding dimension to four and choose $ \lambda_1 = 0.05 $,  $ \lambda_2 = 0.1 $, $ \lambda_3 = 1 $, and $ \kappa = 0 $. These settings were chosen after manual hyperparameter tuning. Following the original implementation published along with \cite{GB24}, we use a linear mapping for the decoder. Despite extensive hyperparameter tuning, the deep learning approach did not yield a good approximation for either test case and we therefore do not show the results.
\end{remark}

\begin{figure}
    \centering
    \begin{minipage}{\linewidth}
        \centering
        \subfiguretitle{(a)}
        \vspace*{0.5ex}
        \includegraphics[width=\linewidth]{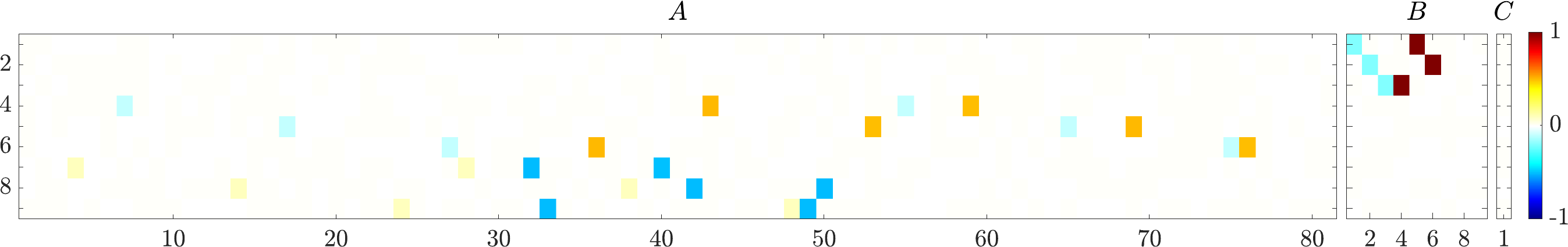}
    \end{minipage} \\[1ex]
    \begin{minipage}{\linewidth}
        \centering
        \subfiguretitle{(b)}
        \vspace*{0.5ex}
        \includegraphics[width=\linewidth]{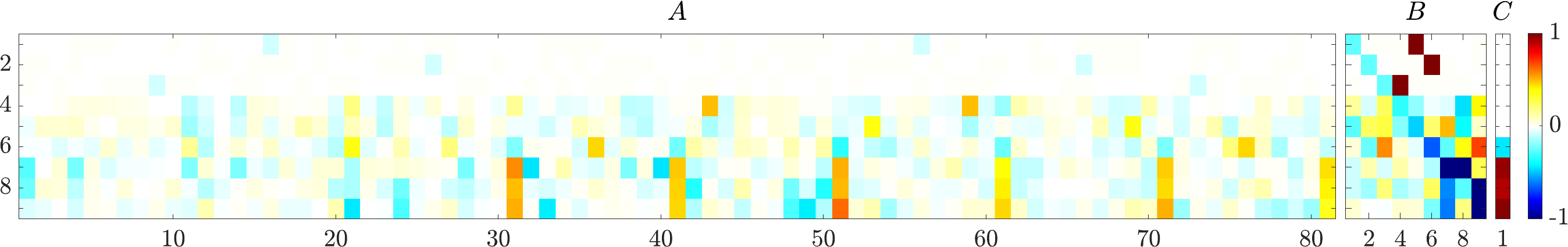}
    \end{minipage} \\[2ex]
    \begin{minipage}[t]{0.431\linewidth}
        \centering
        \subfiguretitle{(c)}
        \includegraphics[width=\linewidth]{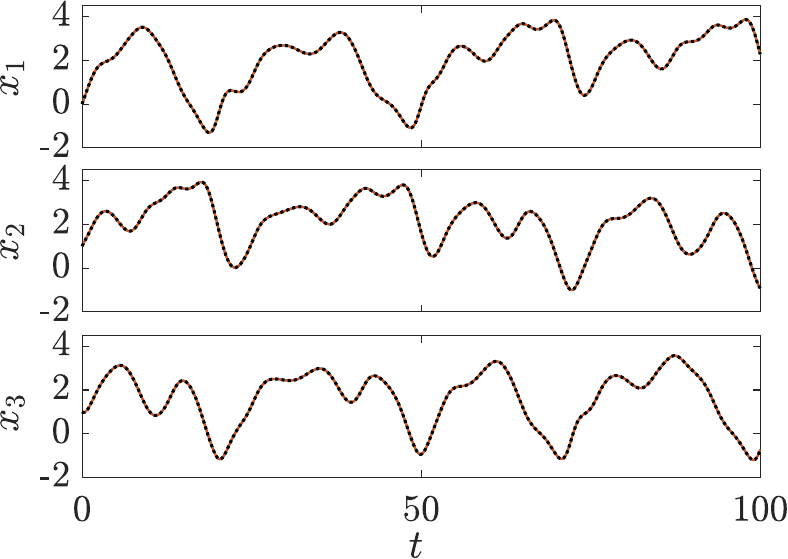}
    \end{minipage}
    \begin{minipage}[t]{0.42\linewidth}
        \centering
        \subfiguretitle{(d)}
        \vspace{0.2ex}
        \includegraphics[width=\linewidth]{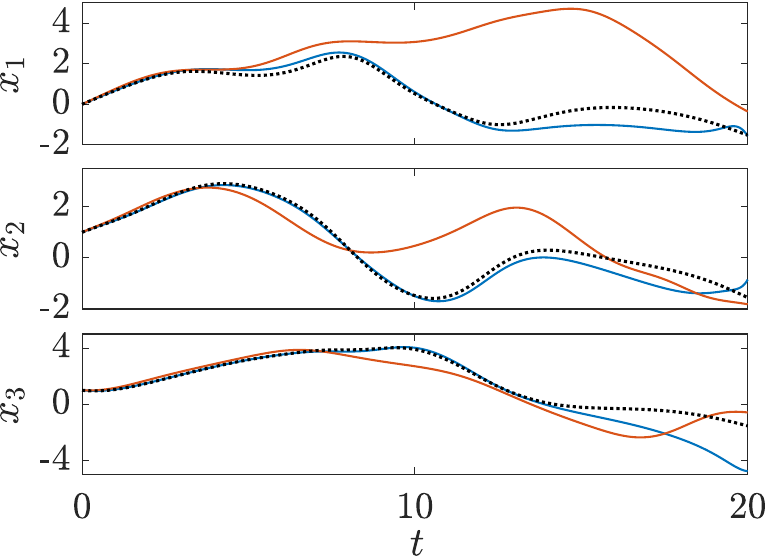} 
    \end{minipage}
    \begin{minipage}[t]{0.11\linewidth}
        \vspace{0.85ex}
        \includegraphics[width=\linewidth]{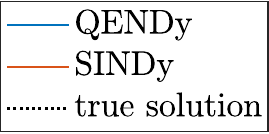}
    \end{minipage}
    \caption{(a) Identified system for $ \alpha = 0.2 $ and $ \beta = 0 $. (b) Identified system for $ \alpha = 0.25 $ and $ \beta = 0.15 $. Only the first three rows are sparse since the functions $ \dot{z}_4, \dots, \dot{z}_9 $ are not contained in the space spanned by the augmented basis. (c) Comparison of the QENDy model, the SINDy model, and the true dynamics for the first test case. Both methods correctly identify the governing equations and the numerical solutions coincide. (d) Comparison of the QENDy model, the SINDy model, and the true dynamics for the second test case. The learned models only allow us to make short-term predictions in this case. The QENDy approximation is slightly more accurate, but both models deviate or become unstable for larger $ t $.}
    \label{fig:Thomas system}
\end{figure}

\subsection{K\'arm\'an vortex street}

We now consider fluid flow past a cylinder in two dimensions at Reynolds number 100. This system is governed by partial differential equations. We discretize the domain using a regular $ 520 \times 180 $ grid and compute the vorticities in the grid points for $ t = 0, \dots, 499 $ as shown in Figure~\ref{fig:Karman vortex street}\ts(a)--(c). We then store the numerically computed vorticities in a three-dimensional array $ V \in \R^{180 \times 520 \times 500} $ and apply \emph{Principal Component Analysis} (PCA). There are three dominant principal components, shown in Figure~\ref{fig:Karman vortex street}\ts(d)--(f), followed by a spectral gap. That is, despite the high-dimensional state space caused by the grid discretization, the dynamics are approximately three-dimensional. By projecting the time-series data onto the dominant principal components, we obtain a three-dimensional dynamical system. It was shown in the Supporting Information supplementing \cite{BPK16} that SINDy successfully identifies an ordinary differential equation that approximates the dynamics in the three-dimensional principal component space. However, this is not sufficient to correctly predict the dynamics in the original state space. Mapping the learned dynamics back to the high-dimensional state space only allows us to identify the main periodic behavior, but small-scale features represented by the subsequent principal components will be neglected. Nonlinear model reduction techniques using, e.g., autoencoders might lead to more accurate lower-dimensional representations~\cite{EVNE20}.

Our goal is to compute a quadratic model using QENDy. We choose $ z = x $, i.e., the embedding space is the original projected principal component space, and estimate the required time derivatives with the aid of finite difference approximations (using a forward difference for the first data point, a backward difference for the last data point, and central differences in between). We apply QENDy to the first 400 data points and then simulate the identified quadratic model for $ t \in [0, 500] $. Furthermore, we also apply SINDy and the deep learning approach proposed in \cite{GB24} using a neural network with two fully connected layers containing twelve units each for the encoder map and a linear mapping for the decoder. Based on extensive manual hyperparameter tuning, we set the embedding dimension to four, the loss weights to $ \lambda_1 = 1 $, $ \lambda_2 = 40 $, and $ \lambda_3 = 80 $, and $ \kappa = 0 $. The results for all methods are shown in Figure~\ref{fig:Karman vortex street}\ts(g) and Figure~\ref{fig:Karman vortex street}\ts(h). The learned quadratic models are good approximations of the projected dynamics and correctly predict the periodic behavior, even though we do not know the true underlying system. QENDy---using only linear basis functions---is in this case slightly more accurate than the neural network approximation. The QENDy and SINDy results, provided we use the quadratically augmented basis, are as expected almost identical.

\begin{figure}
    \definecolor{matlab1}{RGB}{0, 114, 189}
    \definecolor{matlab2}{RGB}{217, 83, 25}
    \definecolor{matlab3}{RGB}{237, 177, 32}
    \definecolor{matlab4}{RGB}{126, 47, 142}
    \newcommand{\cdash}[1]{\textcolor{#1}{\rule[0.55ex]{1em}{0.2ex}}}
    \newcommand{\ddash}[1]{\textcolor{#1}{\raisebox{0.5ex}{\hbox to 2.7ex{\leaders\hbox to 2pt{\hss . \hss}\hfil}}}}
    \centering
    \begin{minipage}{0.32\linewidth}
        \centering
        \subfiguretitle{(a) $ t = 100 $}
        \vspace*{0.5ex}
        \includegraphics[width=\linewidth]{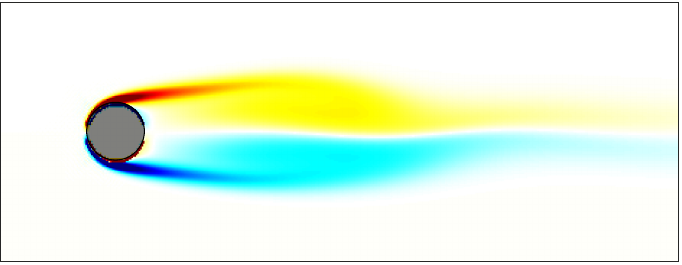}
    \end{minipage}
    \begin{minipage}{0.32\linewidth}
        \centering
        \subfiguretitle{(b) $ t = 200 $}
        \vspace*{0.5ex}
        \includegraphics[width=\linewidth]{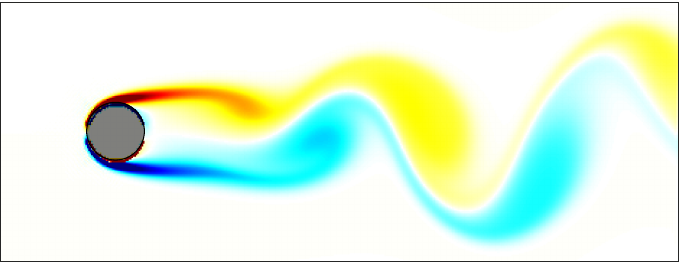}
    \end{minipage}
    \begin{minipage}{0.32\linewidth}
        \centering
        \subfiguretitle{(c) $ t = 300 $}
        \vspace*{0.5ex}
        \includegraphics[width=\linewidth]{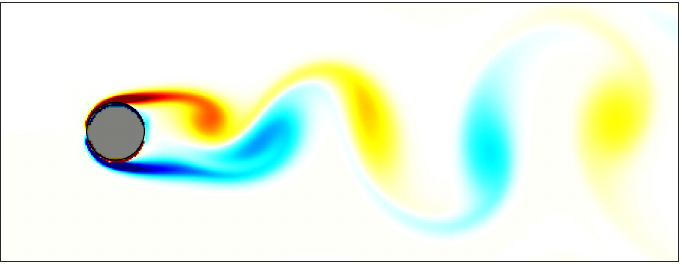}
    \end{minipage} \\[1.5ex]
    \begin{minipage}{0.32\linewidth}
        \centering
        \subfiguretitle{(d) PC 1}
        \vspace*{0.5ex}
        \includegraphics[width=\linewidth]{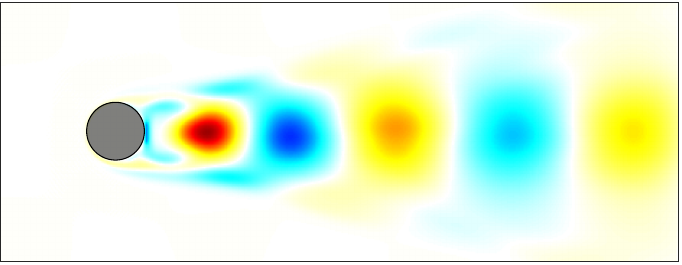}
    \end{minipage}
    \begin{minipage}{0.32\linewidth}
        \centering
        \subfiguretitle{(e) PC 2}
        \vspace*{0.5ex}
        \includegraphics[width=\linewidth]{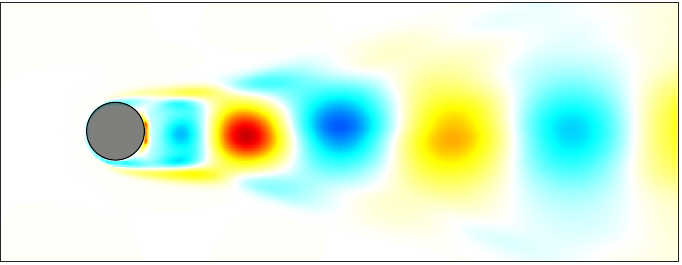}
    \end{minipage}
    \begin{minipage}{0.32\linewidth}
        \centering
        \subfiguretitle{(f) PC 3}
        \vspace*{0.5ex}
        \includegraphics[width=\linewidth]{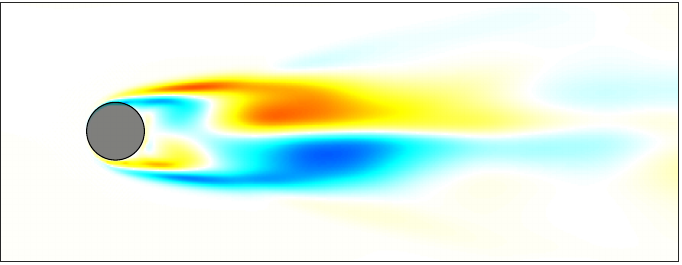}
    \end{minipage} \\[1.5ex]
    \begin{minipage}[t]{0.49\linewidth}
        \centering
        \subfiguretitle{(g)}
        \vspace*{0.5ex}
        \includegraphics[width=0.93\linewidth]{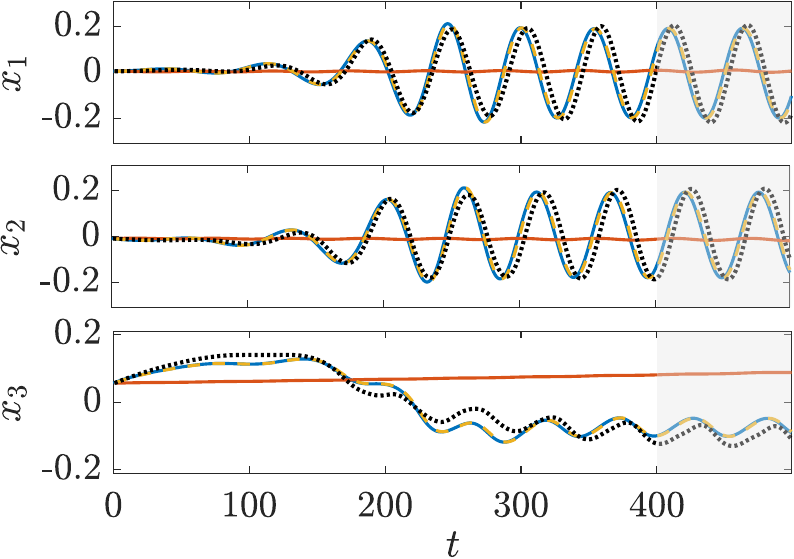} \\[1ex]
        \fbox{\scriptsize
        \cdash{matlab1} QENDy~~
        \cdash{matlab2} SINDy~~
        \cdash{matlab3} SINDy (aug)~~
        \ddash{black} true solution}
    \end{minipage}
    \begin{minipage}[t]{0.49\linewidth}
        \centering
        \subfiguretitle{(h)}
        \vspace*{0.5ex}
        \includegraphics[width=0.93\linewidth]{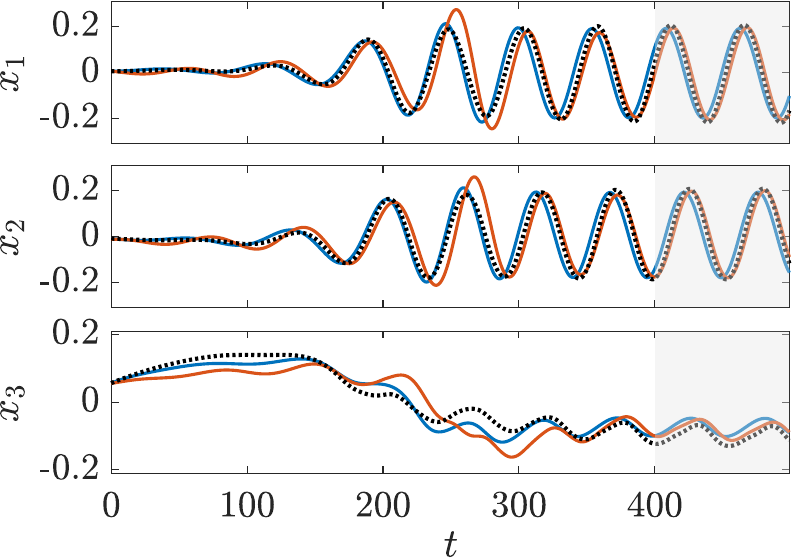} \\[1ex]
        \hspace{7ex}\fbox{\footnotesize
        \cdash{matlab1} QENDy~~
        \cdash{matlab2} DNN~~
        \ddash{black} true solution}
    \end{minipage}
    \caption{(a)--(c) Vorticities of the simulated K\'arm\'an vortex street at different times $ t $. After the transient phase, the system exhibits the well-known periodic vertex shedding. (d)--(f) The first three principal components. (g) Comparison of the QENDy model, the SINDy approach (using both the dictionary and the quadratically augmented dictionary), and the true dynamics. The area shaded in gray is the extrapolation domain, i.e., these points are not contained in the training data set. The trajectories converge to the limit cycle representing the periodic vortex shedding. (h)~Comparison of the QENDy model, the deep learning approach, and the true dynamics.}
    \label{fig:Karman vortex street}
\end{figure}

\section{Conclusion}
\label{sec:Conclusion}

We have derived a method called QENDy that allows us to learn quadratic embeddings of nonlinear dynamical systems from simulation or measurement data and can be viewed as a combination of SINDy \cite{BPK16} and the deep learning method proposed in \cite{GB24}. Given a fixed set of basis functions, the computation of the optimal embedding just requires solving systems of linear equations in the least squares sense. We have shown that in the infinite data limit we obtain the best approximation of the dynamics with respect to a weighted inner product that depends on the sampling of the training data points. While SINDy directly approximates the governing equations using the selected basis functions, QENDy uses a quadratically augmented basis. Our results also explain the purported superior performance of deep learning based quadratic embedding techniques compared to linear methods, which seems to be mostly due to the fact that these methods implicitly generate a larger and thus more expressive feature space. In general, QENDy will not result in a quadratic embedding that perfectly reproduces the dynamics---unless we have prior information about the terms appearing in the governing equations and choose a suitable dictionary---, but nevertheless computes the best approximation, which allows for short-term forecasting. The sparsity of the matrices $ A $ and $ B $ is typically a good indicator for the accuracy of the learned model. If the matrices are not sparse, this implies that almost all basis functions are required to approximate missing terms. That is, QENDy suffers from the same limitations as SINDy. Noisy data or unsuitable basis functions will result in inaccurate models and predictions.

Although the numerical results show that QENDy works well for simple benchmark problems, many open questions remain: High-dimensional systems will in general require a large set of basis functions, which significantly increases the number of parameters we have to estimate. A possible extension would be to develop a kernel-based variant of QENDy, which implicitly works in a potentially infinite-dimensional feature space rather than an explicitly constructed space spanned by a set of basis functions. This would allow us to circumvent the curse of dimensionality associated with the number of basis functions. The drawback is that then the dimensions of the matrices depend on the number of training data points. Alternatively, low-rank tensor approximations as proposed in \cite{GKES19} could be used to compress the data matrices. Furthermore, instead of using Tikhonov regularization, it would also be possible to use iterative hard thresholding or $ \ell_1 $ regression techniques to promote sparsity. Another open question is whether it is possible to incorporate physical constraints such as energy conservation laws or symplecticity and also to guarantee the stability of the learned model. A second-order accurate integrator tailored to quadratic systems has been proposed in \cite{KL97}, the question is whether the method is more robust than conventional integrators or preserves certain properties of the underlying system.

\section*{Data availability}

The QENDy code and examples that support the findings presented in this paper are available at \url{https://github.com/sklus/d3s/}.

\section*{Acknowledgments}

We would like to thank Feliks Nüske and Peter Benner for interesting discussions about the quadratic embedding framework. J.-P.\ was supported by the EPSRC Centre for Doctoral Training in Mathematical Modelling, Analysis and Computation (MAC-MIGS) funded by the UK Engineering and Physical Sciences Research Council (grant EP/S023291/1), Heriot--Watt University and the University of Edinburgh.

\bibliographystyle{unsrturl}
\bibliography{QENDy.bib}

\appendix

\section{Proof of Theorem \ref{thm:derivatives}}
\label{app:Proof}

We first rewrite the loss function as
\begin{align*}
    L(A, B, C) &= \norm{\dot{Z} - A \ts Z_2 - B \ts Z_1 - C \ts \mathds{1}_m^\top}_F^2 \\
    &= \underbrace{\tr\left(\dot{Z}^\top \dot{Z}\right)}_{\circled{0}}
     + \underbrace{\tr\left(Z_2^\top A^\top A \ts Z_2\right)}_{\circled{1}}
     + \underbrace{\tr\left(Z_1^\top B^\top B \ts Z_1\right)}_{\circled{2}}
     + \underbrace{\tr\left(\mathds{1}_m C^\top C \ts \mathds{1}_m^\top\right)}_{\circled{3}} \\
    &- 2 \underbrace{\tr\left(\dot{Z}^\top A \ts Z_2\right)}_{\circled{4}}
     - 2 \underbrace{\tr\left(\dot{Z}^\top B \ts Z_1\right)}_{\circled{5}}
     - 2 \underbrace{\tr\left(\dot{Z}^\top C \ts \mathds{1}_m^\top\right)}_{\circled{6}} \label{eq:loss function} \\
    &+ 2 \underbrace{\tr\left(Z_2^\top A^\top B \ts Z_1\right)}_{\circled{7}}
     + 2 \underbrace{\tr\left(Z_2^\top A^\top C \ts \mathds{1}_m^\top\right)}_{\circled{8}}
     + 2 \underbrace{\tr\left(Z_1^\top B^\top C \ts \mathds{1}_m^\top\right)}_{\circled{9}}.
\end{align*}
Let us now compute the derivatives with respect to the matrix $ A $. Only the terms \circled{1}, \circled{4}, \circled{7}, and \circled{8} depend on $ A $. It holds that
\begin{equation*}
    \pd{}{A} \tr\big(X \ts A \ts Y\big) = X^\top Y^\top
    \quad \text{and} \quad
    \pd{}{A} \tr\big(A^\top X \ts A \ts Y\big) = X A \ts Y + X^\top \! A \ts Y^\top,
\end{equation*}
see, e.g., \cite{PP12}. For term \circled{1}, using the cyclic property of the trace, this implies
\begin{equation*}
     \pd{}{A} \tr\left(Z_2^\top A^\top A \ts Z_2\right) = \pd{}{A} \tr\left(A^\top A \ts Z_2 \ts Z_2^\top\right) = 2 \ts A \ts Z_2 \ts Z_2^\top.
\end{equation*}
Similarly, for the terms \circled{4}, \circled{7}, and \circled{8}, we obtain
\begin{align*}
    \pd{}{A} \tr\left(\dot{Z}^\top A \ts Z_2\right) &= \dot{Z} \ts Z_2^\top, \\
    \pd{}{A} \tr\left(Z_2^\top A^\top B \ts Z_1\right) &= B \ts Z_1 \ts Z_2^\top, \\
    \pd{}{A} \tr\left(Z_2^\top A^\top C \ts \mathds{1}_m^\top\right) &= C \ts \mathds{1}_m^\top Z_2^\top,
\end{align*}
where we used $ \tr\big(X\big) = \tr\big(X^\top\big) $. The derivatives with respect to $ B $ follow in an analogous fashion. Finally, we compute the derivatives with respect to $ C $. For term \circled{3}, we get
\begin{equation*}
    \pd{}{C} \tr\left(\mathds{1}_m \ts C^\top C \ts \mathds{1}_m^\top\right) = \pd{}{C} \tr\left(\mathds{1}_m^\top \ts \mathds{1}_m \ts C^\top C\right) = m \pd{}{C} \tr\left(C^\top C\right) = 2 \ts m \ts C.
\end{equation*}
The remaining derivatives for the terms \circled{6}, \circled{8}, and \circled{9} can be computed in the same way.

\section{Best approximation}
\label{app:Best approximation}

Let $ H $ be a real Hilbert space with inner product $ \innerprod{\cdot}{\cdot} $ and induced norm $ \norm{\cdot} = \innerprod{\cdot}{\cdot}^{\frac{1}{2}} $. Furthermore, let $ \{ \phi_1, \dots, \phi_n \} \subset{H} $ be a set of basis vectors spanning the linear subspace $ V = \mspan\{\phi_1, \dots, \phi_n \} $. Any element $ g \in V $ can be written as
\begin{equation*}
    g = \sum_{i=1}^n \alpha_i \ts \phi_i = \alpha^\top \phi,
\end{equation*}
where $ \alpha = [\alpha_1, \dots, \alpha_n]^\top \in \R^n $. Note that this representation is unique only if the basis vectors are linearly independent. Given an arbitrary vector $ f \in H $, we want to find $ g \in V $ such that the error
\begin{equation} \label{eq:best approximation problem}
    E(\alpha) := \norm{f - g}^2 = \innerprod{f}{f} - 2 \ts \sum_{i=1}^n \alpha_i \ts \innerprod{\phi_i}{f} + \sum_{i=1}^n \sum_{j=1}^n \alpha_i \ts \alpha_j \ts \innerprod{\phi_i}{\phi_j}
\end{equation}
is minimized, i.e., $ g $ is the best approximation of $ f $ contained in the subspace $ V $. Defining $ A \in \R^{n \times n} $ and $ b \in \R^n $, with $ a_{ij} = \innerprod{\phi_i}{\phi_j} $ and $ b_i = \innerprod{\phi_i}{f} $, we have
\begin{equation*}
    E(\alpha) = \innerprod{f}{f} - 2 \ts b^\top \alpha + \alpha^\top A \ts \alpha
    \implies
    \nabla E(\alpha) = -2 \ts b + 2 \ts A \ts \alpha.
\end{equation*}
In order to find optimal solutions, we thus have to solve the system of linear equations
\begin{equation} \label{eq:best approximation}
    A \ts \alpha = b.
\end{equation}

\begin{theorem}
Let the basis vectors $ \{ \phi_1, \dots, \phi_n \} $ be linearly independent. Then the matrix $ A $ is symmetric and positive definite and the unique minimizer of \eqref{eq:best approximation problem} is given by \eqref{eq:best approximation}.
\end{theorem}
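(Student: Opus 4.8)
The plan is to recognize that $A$ is nothing other than the Gram matrix of the basis vectors $\{\phi_1, \dots, \phi_n\}$ and to exploit the resulting identification of the quadratic form $\alpha^\top A \ts \alpha$ with a squared norm. First I would dispatch symmetry directly from the symmetry of the inner product on the real Hilbert space: since $a_{ij} = \innerprod{\phi_i}{\phi_j} = \innerprod{\phi_j}{\phi_i} = a_{ji}$, the matrix equals its transpose without any appeal to the independence hypothesis.

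For positive definiteness, the load-bearing observation is that for every $\alpha \in \R^n$, bilinearity of the inner product yields
\begin{equation*}
    \alpha^\top A \ts \alpha = \sum_{i=1}^n \sum_{j=1}^n \alpha_i \ts \alpha_j \ts \innerprod{\phi_i}{\phi_j} = \norm{\sum_{i=1}^n \alpha_i \ts \phi_i}^2 \ge 0.
\end{equation*}
This already gives positive semidefiniteness for free; the only place the hypothesis is genuinely needed is in upgrading this to strict positivity. The right-hand side vanishes exactly when $\sum_{i} \alpha_i \ts \phi_i = 0$, and linear independence of the $\phi_i$ then forces $\alpha = 0$. Hence $\alpha^\top A \ts \alpha > 0$ for all $\alpha \ne 0$, so $A$ is positive definite and, in particular, invertible.

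For the minimizer claim I would argue that $E$ is a strictly convex quadratic. Its gradient $\nabla E(\alpha) = -2 \ts b + 2 \ts A \ts \alpha$ (already computed in the excerpt) vanishes if and only if $A \ts \alpha = b$, and invertibility of $A$ guarantees a unique such critical point $\alpha^*$. To certify that this is the global minimizer rather than merely a stationary point, I would complete the square, writing
\begin{equation*}
    E(\alpha) = E(\alpha^*) + (\alpha - \alpha^*)^\top A \ts (\alpha - \alpha^*),
\end{equation*}
where positive definiteness of $A$ makes the second summand strictly positive whenever $\alpha \ne \alpha^*$. Equivalently, one may simply note that the Hessian $2 \ts A$ is positive definite everywhere.

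There is no serious obstacle here: the argument is essentially a single calculation. The only conceptually delicate step is the passage from positive semidefiniteness to positive definiteness, which is precisely where linear independence is invoked; without it one obtains only semidefiniteness, $A$ need not be invertible, and the minimizer need not be unique (consistent with the earlier observation that the representation $g = \alpha^\top \phi$ is itself non-unique for dependent bases). I would therefore keep the Gram-matrix identity front and center, since it simultaneously delivers symmetry, semidefiniteness, and---via independence---definiteness, collapsing all three assertions of the theorem into one computation.
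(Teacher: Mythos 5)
Your proof is correct and complete. Note that the paper does not actually prove this theorem at all---its ``proof'' is a citation to the references \cite{Deu01, SK11}---so there is no in-paper argument to compare against; what you have written is essentially the standard textbook argument that those references contain, now made self-contained. All three pillars are sound: symmetry of $A$ follows from symmetry of the real inner product and needs no independence hypothesis; the Gram-matrix identity $\alpha^\top A \ts \alpha = \norm{\sum_{i=1}^n \alpha_i \ts \phi_i}^2$ gives semidefiniteness, with linear independence invoked exactly once to upgrade it to strict definiteness (hence invertibility of $A$); and the completing-the-square identity $E(\alpha) = E(\alpha^*) + (\alpha - \alpha^*)^\top A \ts (\alpha - \alpha^*)$, valid because $A \ts \alpha^* = b$ and $A = A^\top$ imply $b^\top \alpha = (\alpha^*)^\top A \ts \alpha$, certifies that the unique critical point is the unique global minimizer rather than merely a stationary point. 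The one step a referee might ask you to spell out is the algebraic verification of that identity (expand the quadratic and cancel), but it is routine. Your closing observation---that without independence one retains semidefiniteness and consistency but loses uniqueness---is also exactly right, and it is precisely the degenerate case the paper goes on to analyze after the theorem, where the minimum-norm solution $\alpha = A^+ b$ is selected instead.
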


\begin{proof}
See, e.g., \cite{Deu01, SK11}.
\end{proof}

The matrix $ A $ is non-singular if and only if the basis vectors are linearly independent. In our setting, however, this is in general not the case. Assume that
\begin{equation*}
    V = \mspan\{\phi_1, \dots, \phi_n \} = \mspan\{\phi_1, \dots, \phi_r \},
\end{equation*}
with $ r < n $, i.e., the basis vectors span only an $ r $-dimensional vector space. Thus,
\begin{equation*}
    \phi_k = \sum_{i=1}^r \beta_{ki} \ts \phi_i, \quad \text{for } k = r+1, \dots, n.
\end{equation*}
Consider now the $ k $th rows of $ A $ and $ b $, with $ k > r $, then
\begin{align*}
    A_{kj} &= \innerprod{\phi_k}{\phi_j} = \sum_{i=1}^r \beta_{ki} \innerprod{\phi_i}{\phi_j} = \sum_{i=1}^r \beta_{ki} \ts a_{ij}, \\
    b_k &= \innerprod{\phi_k}{f} = \sum_{i=1}^r \beta_{ki} \ts \innerprod{\phi_i}{f} = \sum_{i=1}^r \beta_{ki} \ts b_i.
\end{align*}
The last $ n-r $ rows of $ A $ and $ b $ are hence the same linear combinations of the first $ r $ rows of $ A $ and~$ b $. It follows that the rank of the augmented matrix $ [A \mid b] $ is $ r $, which is also the rank of $ A $. That is, the system \eqref{eq:best approximation} is consistent and there is an infinite number of solutions. Let $ \widetilde{A} $ be the submatrix comprising the first $ r $ rows and columns of $ A $, $ \widetilde{b} $ the first $ r $ rows of $ b $, and $ \widetilde{\alpha} $ the unique solution of $ \widetilde{A} \ts \widetilde{\alpha} = \widetilde{b} $, then the vector
\begin{equation*}
     \alpha_p =
     \begin{bmatrix}
        \widetilde{\alpha} \\
        0
     \end{bmatrix} \in \R^n
\end{equation*}
is a particular solution of \eqref{eq:best approximation}. Adding any vector $ u $ contained in the null space of $ A $ will also result in a valid solution. Defining $ \alpha = A^+ b $ picks the solution with the smallest 2-norm, see \cite{GVL13}.

\begin{example}
\begin{figure}
    \centering
    \begin{minipage}{0.45\linewidth}
        \centering
        \subfiguretitle{(a)}
        \vspace*{0.5ex}
        \includegraphics[width=0.95\linewidth]{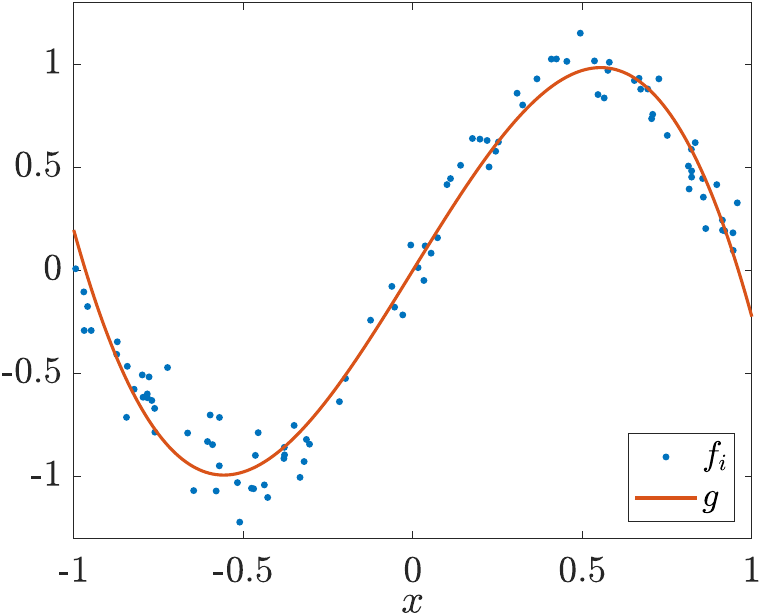}
    \end{minipage}
    \begin{minipage}{0.45\linewidth}
        \centering
        \subfiguretitle{(b)}
        \vspace*{0.5ex}
        \includegraphics[width=0.95\linewidth]{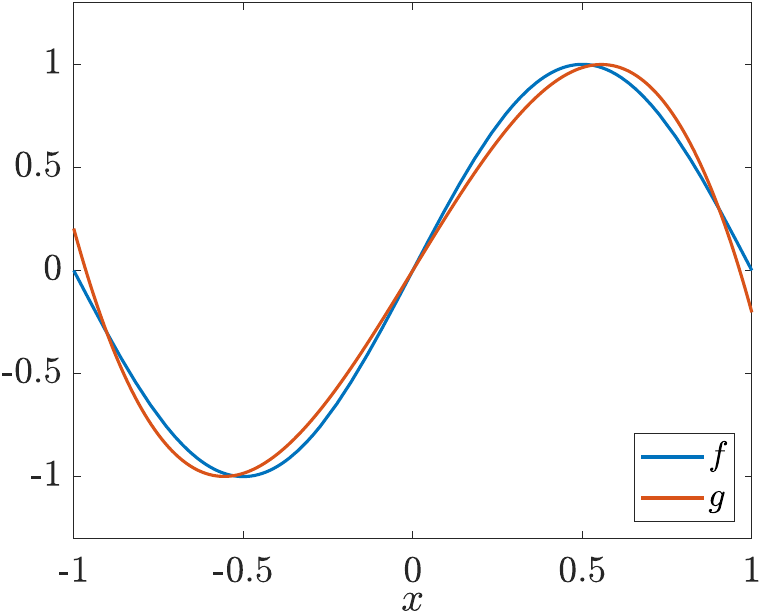}
    \end{minipage}
    \caption{(a) Approximation of the data points $ (x_i, f_i) $ by a polynomial of degree 3. (b) Approximation of $ \sin(\pi \ts x) $ by a polynomial of degree 3.}
    \label{fig:best approximation}
\end{figure}

We have to distinguish between discrete and continuous best approximation:
\begin{enumerate}[leftmargin=3.5ex, itemsep=0ex, topsep=0.5ex, label=\roman*)]
\item Assume we have data $ \{(x_i, f_i) \}_{i=1}^m $, with $ x_i, f_i \in \R $ and we want to find a function
\begin{equation*}
    g(x) = \sum_{i=1}^n \alpha_i \ts \widehat{\phi}_i(x)
\end{equation*}
that approximates the data. We define the vectors $ \phi_i = [\widehat{\phi}_i(x_1), \dots, \widehat{\phi}_i(x_m)]^\top \in \R^m $ and $ f = [f_1, \dots, f_m]^\top \in \R^m $. Solving the resulting system of linear equations \eqref{eq:best approximation}, we obtain the coefficients $ \alpha $. The Hilbert space is $ H = \R^m $, with inner product $ \innerprod{f}{g} = \sum_{i=1}^m f_i \ts g_i \ts \mu_i $, where $ \mu_i > 0 $ is the weight for the data point $ x_i $.
\item Assume we want to approximate the function $ f(x) $ in the interval $ [a, b] $ by a function of the form
\begin{equation*}
    g(x) = \sum_{i=1}^n \alpha_i \ts \phi_i(x).
\end{equation*}
We choose $ H = L_\mu^2([a, b]) $, i.e., the space of square-integrable functions with respect to a measure~$ \mu $, with inner product $ \innerprod{f}{g}_\mu = \int_a^b f(x) \ts g(x) \ts \mathrm{d}\mu(x) $, and again compute $ \alpha $ by solving \eqref{eq:best approximation}.
\end{enumerate}
The difference between discrete and continuous best approximation problems is also illustrated in Figure~\ref{fig:best approximation}. \exampleSymbol
\end{example}

\section{Quadratic embedding of the modified Thomas attractor}
\label{app:Modified Thomas attractor}

As shown in Section~\ref{sec:Numerical results}, the nine-dimensional feature space defined in \eqref{eq:Thomas embedding} is not sufficient for the modified Thomas attractor. In order to find a quadratic embedding, we augment the feature space as follows:
\begin{equation*}
    \renewcommand*{\arraystretch}{1.05}
    z =
    \begin{bmatrix}
        x_1 \\
        x_2 \\
        x_3 \\
        \sin(x_1) \\
        \sin(x_2) \\
        \sin(x_3) \\
        \cos(x_1) \\
        \cos(x_2) \\
        \cos(x_3) \\
        x_2 \ts \sin(x_1) \\
        x_3 \ts \sin(x_2) \\
        x_1 \ts \sin(x_3) \\
        x_2 \ts \cos(x_1) \\
        x_3 \ts \cos(x_2) \\
        x_1 \ts \cos(x_3)
    \end{bmatrix}
    \implies
    \dot{z} =
    \begin{bmatrix}
        z_5 - \alpha \ts z_1 - \beta \ts z_{13} \\
        z_6 - \alpha \ts z_2 - \beta \ts z_{14} \\
        z_4 - \alpha \ts z_3 - \beta \ts z_{15} \\
        z_5 \ts z_7 - \alpha \ts z_1 \ts z_7 - \beta \ts z_7 \ts z_{13} \\
        z_6 \ts z_8 - \alpha \ts z_2 \ts z_8 - \beta \ts z_8 \ts z_{14} \\
        z_4 \ts z_9 - \alpha \ts z_3 \ts z_9 - \beta \ts z_9 \ts z_{15} \\
        -z_4 \ts z_5 + \alpha \ts z_1 \ts z_4 + \beta \ts z_4 \ts z_{13} \\
        -z_5 \ts z_6 + \alpha \ts z_2 \ts z_5 + \beta \ts z_5 \ts z_{14} \\
        -z_4 \ts z_6 + \alpha \ts z_3 \ts z_6 + \beta \ts z_1 \ts z_{15} \\
        (z_4 \ts z_6 + z_5 \ts z_{13}) - \alpha \ts (z_1 \ts z_{13} + z_2 \ts z_4) - \beta \ts (z_4 \ts z_{14} + z_{13}^2) \\
        (z_4 \ts z_5 + z_6 \ts z_{14}) - \alpha \ts (z_2 \ts z_{14} + z_3 \ts z_5) - \beta \ts (z_5 \ts z_{15} + z_{14}^2) \\
        (z_4 \ts z_{15} + z_5 \ts z_6) - \alpha \ts (z_1 \ts z_6 + z_3 \ts z_{15}) - \beta \ts (z_6 \ts z_{13} + z_{15}^2) \\
        (-z_5 \ts z_{10} + z_6 \ts z_7) - \alpha (-z_1 \ts z_{10} + \alpha \ts z_2 \ts z_7) - \beta \ts (z_7 \ts z_{14} - \ts z_{10} \ts z_{13}) \\
        (z_4 \ts z_8 - z_6 \ts z_{11}) - \alpha \ts (-z_2 \ts z_{11} + z_3 \ts z_8) - \beta \ts (-z_{11} \ts z_{14} + z_8 \ts z_{15}) \\
       (-z_4 \ts z_{12} + z_5 \ts z_9) - \alpha \ts (z_1 \ts z_9 - \alpha \ts z_3  \ts z_{12}) - \beta \ts (z_9 \ts z_{13} - \beta \ts z_{12} \ts z_{15})
    \end{bmatrix}.
\end{equation*}
The representation, however, is not unique. Using this set of basis functions, QENDy successfully learns a valid quadratic model and the matrices $ A $ and $ B $ are again sparse.

\end{document}